\theoremstyle{plain}
\newtheorem{theorem}{Theorem}[section]
\newtheorem{proposition}[theorem]{Proposition}
\newtheorem{lemma}[theorem]{Lemma}
\newtheorem{corollary}[theorem]{Corollary}
\newtheorem{remark}[theorem]{Remark}
\numberwithin{equation}{section}
\numberwithin{table}{section}
\newcommand{\vph}{\vphantom{\int_p^{1^1}}}
\newcommand{\Lie}[1]{\operatorname{\rm{#1}}}
\newcommand{\lie}[1]{\operatorname{\mathfrak{#1}}}
\newcommand{\G}{\Lie{G}}
\newcommand{\LH}{\Lie{H}}
\newcommand{\LK}{\Lie{K}}
\newcommand{\LU}{\Lie{U}}
\newcommand{\GL}{\Lie{GL}}
\newcommand{\SO}{\Lie{SO}}
\newcommand{\SL}{\Lie{SL}}
\newcommand{\Sp}{\Lie{Sp}}
\newcommand{\SU}{\Lie{SU}}
\newcommand{\PSU}{\Lie{PSU}}
\newcommand{\Gr}{{\Lie{Gr}}}
\newcommand{\g}{\lie{g}}
\newcommand{\lk}{\lie{k}}
\newcommand{\lh}{\lie{h}}
\newcommand{\gl}{\lie{gl}}
\newcommand{\so}{\lie{so}}
\newcommand{\su}{\lie{su}}
\newcommand{\lu}{\lie{u}}
\newcommand{\lsp}{\lie{sp}}
\newcommand{\bC}{{\mathbb C}}
\newcommand{\bH}{{\mathbb H}}
\newcommand{\bR}{{\mathbb R}}
\newcommand{\pC}{\mathbb{CP}}
\newcommand{\pH}{\mathbb{HP}}
\newcommand{\Ld}{\mathcal L}
\newcolumntype{C}{>{$}c<{$}}
\DeclareMathOperator{\im}{im}
\DeclareMathOperator{\Ad}{Ad}
\DeclareMathOperator{\End}{End}
\newcommand{\hook}{{\lrcorner\,}}
\DeclarePairedDelimiter{\Span}{\langle}{\rangle}
\newcommand{\abs}[1]{\left\lvert#1\right\rvert}
\newcommand{\D}[2]{\frac{\partial #1}{\partial #2}}
\newcommand{\norm}[1]{\left\lVert#1\right\rVert}
\begin{document}

\title[\( 8 \)-dimensional quaternionic geometry]
{Quaternionic geometry in dimension eight}

\author{Diego Conti}
\address[D.~Conti]{ Dipartimento di Matematica e Applicazioni\\ Universit\`a di Milano
  Bicocca\\ Via Cozzi 55\\ 20125 Milano\\ Italy.}
\email{diego.conti@unimib.it}

\author{Thomas Bruun Madsen}
\address[T.\,B.~Madsen]{Department of Mathematics\\ Aarhus University\\
Ny Munkegade 118, Bldg 1530\\ 8000 Aarhus\\ Denmark.}
\email{thomas.madsen@math.au.dk}

\author{Simon Salamon}
\address[S. Salamon]{Department of Mathematics\\ King's College London\\
Strand\\  London WC2R 2LS\\ United Kingdom.}
\email{simon.salamon@kcl.ac.uk}

\begin{abstract}
  We describe the \( 8 \)-dimensional Wolf spaces as cohomogeneity one
  \( \SU(3) \)-manifolds, and discover perturbations of the
  quaternion-k\"ahler metric on the simply-connected \( 8 \)-manifold \(
  \G_2\!/\!\SO(4) \) that carry a closed fundamental \( 4 \)-form but are
  not Einstein.
\end{abstract}

\maketitle

\centerline{\it To Nigel Hitchin on the occasion of his 70th birthday}

\bigskip

\section{Introduction}
\label{sec:intro}

Of the ``fundamental geometries'' captured by Berger's list of
holonomy groups, the quaternionic unitary group stands out in that
Riemannian manifolds with holonomy in \( \Sp(n)\Sp(1) \) are Einstein
but not Ricci-flat, unless locally hyperk\"ahler. Excluding the latter
case, the study of these quaternion-k\"ahler manifolds splits into two
cases, depending on the sign of the scalar curvature. The negative
case is fairly flexible
\cite{Alekseevski:qK1,Alekseevski:qK2,Cortes:qKASp,Cortes:non-hom-qK},
but the situation of positive scalar curvature is extremely rigid. In
fact, it is conjectured that a complete positive quaternion-k\"ahler
manifold is necessarily one of the symmetric spaces that were first
described by Wolf \cite{Wolf:hom_qK}. This rigidity suggests a quest
for ways of weakening the holonomy condition.  It turns out that
dimension \( 8 \) harbours a particularly natural type of almost
quaternion-k\"ahler manifold.

An appealing way of expressing an almost quaternionic Hermitian structure is to say that
our \( 8 \)-manifold admits a \( 4 \)-form that is pointwise linearly equivalent to
\begin{equation}
\label{eq:quat_4_form}
\Omega=\tfrac12(\omega_1^2+\omega_2^2+\omega_3^2),
\end{equation}
where \( (\omega_1,\omega_2,\omega_3) \) is the standard hyperk\"ahler triplet on \( \bR^8\cong\bH^2 \):
\begin{equation*}
\begin{cases}
\omega_1=dx^{12}+dx^{34}+dx^{56}+dx^{78},\\
\omega_2=dx^{13}+dx^{42}+dx^{57}+dx^{86},\\
\omega_3=dx^{14}+dx^{23}+dx^{58}+dx^{67}.
\end{cases}
\end{equation*}
In these terms, the quaternion-k\"ahler condition then amounts to \( \Omega \) being parallel for the Levi-Civita connection,
\( \nabla\Omega=0 \). 

Swann \cite{Swann:AspectsSymplectiquesDe} observed that it is possible
to have \( \Omega \) non-parallel and closed (and so harmonic), but
that closedness of the fundamental \( 4 \)-form implies
quaternion-k\"ahler in dimension at least \( 12 \).  Using exterior
differential systems, Bryant \cite{Bryant:closedsp2sp1} analysed the
local existence of the ``harmonic'' \( \Sp(2)\Sp(1) \)-structures and
showed that solutions exist in abundance, even though the PDE system
at first looks overdetermined. In fact, involutivity of the exterior
differential system can be deduced by observing that the contraction
of \( \Omega \) with any vector \( v\in \bR^8 \) induces a stable form
on the quotient \( \bR^8\!/\!\Span{v} \), cf.\
\cite{Conti:EmbeddingIntoManifolds,Conti-M:Harmonic}. In
\cite{Salamon:Almostparallelstructures} the third author provided the
first compact non-parallel example of such a geometry. Later many more
examples have followed
\cite{Giovannini:SpecialStructuresAnd,Conti-M:Harmonic} by reducing
the internal symmetry group from the quaternionic unitary group to its
intersection with \( \SO(6) \) and \( \SO(7) \).

These previously known examples of non-parallel harmonic \( \Sp(2)\Sp(1) \)-manifolds all have infinite fundamental group and associated metric 
of negative scalar curvature. A natural question is whether such structures with positive scalar curvature exist on simply-connected manifolds.

\subsection*{Acknowledgements.}

We all thank Robert Bryant, David Calderbank and Andrew Swann for
useful comments. DC was partially supported by FIRB 2012 ``Geometria
differenziale e teoria geometrica delle funzioni''. TBM gratefully
acknowledges financial support from Villum Fonden. The completion of
this work was supported by a grant from the Simons Foundation
(\#488635, Simon Salamon), which will also provide a forum for pursuing
topics mentioned in the final section. The interest of all three
authors in special holonomy can be traced back to Nigel Hitchin, who
recognised long ago the importance of this field.

\section{The Wolf spaces}
\label{sec:wolf-spaces}

In Wolf's construction \cite{Wolf:hom_qK} one starts with a compact centreless simple Lie group \( \G \) with Lie algebra \( \g \) and choice of Cartan subalgebra \( \lie{t}\subset\g \). One then picks a maximal root \( \beta\in \lie{t} \) and considers an associated \( \lsp(1) \) and its centraliser \( \lie{l}_1 \) in \( \g \). The Lie algebra \( \lk=\lsp(1)\oplus\lie{l}_1 \) will model the holonomy algebra of a symmetric space: if we let \( \G \) be the simply-connected compact simple Lie group corresponding to \( \g \) and \( \LK \) the compact subgroup generated by \( \lk \), then \( \G\!/\! \LK \) is a compact symmetric quaternion-k\"ahler manifold, a so-called \emph{Wolf space}, with holonomy \( \LK \). The associated quaternionic structure on the tangent space is generated by the subgroup \( \Sp(1)\subset\LK \).

\subsection{Quaternionic projective plane}
\label{sec:HP2}

The model space for a quaternion-k\"ahler \( 8 \)-manifold is the quaternionic projective plane
\begin{equation*}
\pH(2)=\frac{\Sp(3)}{\Sp(2)\times\Sp(1)}. 
\end{equation*}
In Wolf's terms, we can describe this as follows.
 
Choose the Cartan subalgebra \( \lh_{\bC} \) of \( \lsp(3)_{\bC} \) spanned by the three elements
\begin{equation*}
H_j=E_{i,i}-E_{i+3,i+3},
\end{equation*}
where the matrix \( E_{k,\ell} \) has only non-zero entry, equal to \( 1 \), at position \( (k,\ell) \). Then let \( L_j\in \lie{h}^* \) be the element satisfying \( L_j(H_i)=\delta_{ji} \). The corresponding roots of  \( \lsp(3)_{\bC} \) are the vectors \( \pm L_i\pm L_j \). The associated eigenspaces are spanned by
\begin{equation*}
\begin{gathered}
X_{i,j}=E_{i,j}-E_{3+j,3+i},\\
Y_{i,j}=E_{i,3+j}+E_{j,3+i},\,Z_{i,j}=E_{3+i,j}+E_{3+j,i},\\
U_i=E_{i,3+i},\, V_i=E_{3+i,i},
\end{gathered}
\end{equation*}
where \( i\neq j \) in the first two rows.

A real structure \( \sigma \) is determined by
\begin{equation*}
\begin{gathered}
\sigma(H_i)=-H_i,\, \sigma(X_{i,j})=-X_{j,i},\\
\sigma(Y_{i,j})=-Z_{i,j},\,\sigma(Z_{i,j})=-Y_{i,j},\\ 
\sigma(U_i)=-V_i,\,\sigma(V_i)=-U_i,
\end{gathered}
\end{equation*}
and we can therefore choose a basis of \( \lsp(3) \) given by
\begin{equation*}
\{\underbrace{iH_k}_{A_k},\overbrace{X_{k,\ell}\!-\!X_{\ell,k}}^{P_{k+\ell-2}},\underbrace{i(X_{k,\ell}\!+\!X_{\ell,k})}_{P_{k+\ell+1}},\overbrace{Y_{k,\ell}\!-\!Z_{k,\ell}}^{Q_{k+\ell-2}},\underbrace{i(Y_{k,\ell}\!+\!Z_{k,\ell})}_{Q_{k+\ell+1}},\overbrace{U_k\!-\!V_k}^{R_k},\underbrace{i(U_k\!+\!V_k)}_{R_{k+3}}\}. 
\end{equation*}

In these terms Wolf's highest root \( \lsp(1) \) is given by
\begin{equation*}
\lsp(1)=\Span{A_1,R_1,R_4},
\end{equation*}
and its centraliser is the copy of \( \lsp(2) \) described as
\begin{equation*}
\lie{k}_1=\Span{A_2,A_3,P_3,P_6,Q_3,Q_6,R_2,R_3,R_5,R_6}.
\end{equation*}

We now have the direct sum decomposition
\begin{equation*}
\lsp(3)=\lie{k}\oplus\lie{p},
\end{equation*}
where \( \lie{k}=\lsp(1)\oplus\lsp(2) \) and
\begin{equation*}
\lie{p}=\lie{k}^\perp=\Span{P_1,P_2,P_4,P_5,Q_1,Q_2,Q_4,Q_5}.
\end{equation*}
The basis
\begin{equation*}
\tfrac1{4\sqrt2}P_1,\,\tfrac1{4\sqrt2}P_4,\,\tfrac1{4\sqrt2}Q_1,\,\tfrac1{4\sqrt2}Q_4,\,\tfrac1{4\sqrt2}P_2,\,\tfrac1{4\sqrt2}P_5,\,\tfrac1{4\sqrt2}Q_2,\,\tfrac1{4\sqrt2}Q_5,
\end{equation*}
is orthonormal for the Killing metric on the subspace \( \lie{p}=(\lsp(1)\oplus\lsp(2))^\perp \), 
and determines an adapted frame for the \( \Sp(2)\Sp(1) \)-structure. Its dual basis is given by
\begin{equation*}
\begin{gathered}
f^1=4\sqrt2p_1,\, f^2=4\sqrt2p_4,\, f^3=4\sqrt2q_1,\,f^4=4\sqrt2q_4,\, f^5=4\sqrt2p_2,\,\\ f^6=4\sqrt2p_5,\, f^7=4\sqrt2q_2,\,f^8=4\sqrt2q_5, 
\end{gathered}
\end{equation*}
where \( p_1,\dots, q_5\) is the dual basis of \( P_1,\ldots, Q_5 \).

\subsection{Complex Grassmannian}
\label{sec:Gr2-C4}

Consider next the complex Grassmannian of planes in \( \bC^4 \):
\begin{equation*}
\Gr_2(\bC^4)=\frac{\SU(4)}{\Lie{S}(\Lie{U}(2)\times\Lie{U}(2))}.
\end{equation*} 

In order to describe Wolf's structure on this space, 
we begin by considering \( \SU(4)_{\bC}=\SL(4,\bC) \) with its usual basis:
\begin{equation*}
H_i\!=\!E_{i,i}-E_{i+1,i+1},\, X_1\!=\!E_{1,2},X_2\!=\!E_{1,3},\ldots,X_6\!=\!E_{3,4},\, Y_1\!=\!E_{2,1},\ldots, Y_6\!=\!E_{4,3}.
\end{equation*}
From the real structure \( \sigma \), given by \( \sigma(H_i)=-H_i \), \( \sigma(X_i)=-Y_i \), \( \sigma(Y_i)=-X_i \), we see that a basis of \( \su(4) \) can be described as
\begin{equation*}
\su(4)=\Span{
\underbrace{iH_j}_{A_j},\underbrace{X_j-Y_j}_{C_j},\underbrace{i(X_j+Y_j)}_{B_j}}.
\end{equation*}

In these terms, Wolf's highest root \( \lsp(1) \) reads
\begin{equation*}
\lsp(1)=\Span{i(H_1+H_2+H_3),C_3,B_3}
\end{equation*}
and its centraliser is
\begin{equation*}
\lie{k}_1=\Span{i(H_1-H_3),iH_2,C_4,B_4}.
\end{equation*}

As a result we have the direct sum decomposition
\begin{equation*}
\su(4)=\lie{k}\oplus \lie{p},
\end{equation*}
where \( \lie{k}=\lsp(1)\oplus\lie{k}_1 \) and
\begin{equation*}
\lie{p}=\lie{k}^\perp=\Span{C_1,B_1,C_2,B_2,C_5,B_5,C_6,B_6}.
\end{equation*}
We see that the orthonormal frame
\begin{equation*}
\tfrac14C_1,\tfrac14B_2,\tfrac14C_5,\tfrac14B_5,\tfrac14C_2,\tfrac14B_2,\tfrac14C_6,\tfrac14B_6
\end{equation*}
is adapted to the \( \Sp(2)\Sp(1) \)-structure. Letting \( c_1,\ldots, b_6 \) denote the dual basis
of \( C_1,\ldots,B_6 \), we then have an \( \Sp(2)\Sp(1) \)-adapted coframe \( f^1,\ldots, f^8 \) given by \( f^1=4c_1 \), and so forth.

\subsection{The exceptional Wolf space}
\label{sec:exc-Wolf-space}

We finally turn to Wolf's construction of a quaternion-k\"ahler structure on 
\begin{equation*}
\frac{\G_2}{\SO(4)}.
\end{equation*}
This is slightly more involved, due to the more complicated nature of \( \g_2 \). 

First we need to choose a suitable basis of \( (\g_2)_{\bC} \). We shall follow \cite{Fulton-H:Rep}, where the long roots are given by
\begin{equation*}
\alpha_2=(-\tfrac32,\tfrac{\sqrt3}2)=-\beta_2, \alpha_5=(\tfrac32,\tfrac{\sqrt3}2)=-\beta_5,\alpha_6=(0,\sqrt3)=-\beta_6,
\end{equation*}
and the short roots are
\begin{equation*} 
\alpha_1=(1,0)=-\beta_1, \alpha_3=(-\tfrac12,\tfrac{\sqrt3}2)=-\beta_3, \alpha_4=(\tfrac12,\tfrac{\sqrt3}2)=-\beta_4. 
\end{equation*} 
We shall pick \( \beta=(0,\sqrt3) \) as the highest root.

The real structure \( \sigma \) of \( \g_2^\bC \) is determined by 
\begin{equation*}
\sigma(H_i)=-H_i, \quad \sigma(X_i)=-Y_i, \quad \sigma(Y_i)=-X_i,
\end{equation*}
and a basis of \( \g_2 \) is therefore given by
\begin{equation*}
\{ A_1=iH_1,A_2=iH_2, W_j=X_j-Y_j, Z_j=i(X_j+Y_j)\colon\, 1\leqslant j\leqslant 6 \}.
\end{equation*}

The highest root \( \lsp(1) \) is given by
\begin{equation*}
\lsp(1)=\Span{A_1+2A_2, W_6,Z_6},
\end{equation*}
and its centraliser in \( \g_2 \) is given by
\begin{equation*}
\lie{l}_1=\Span{A_1, W_1, Z_1}.
\end{equation*}

We now have the direct sum decomposition
\begin{equation*}
\g_2=\lk\oplus\lie{p},
\end{equation*}
where 
\begin{equation*}
\lie{p}=\lk^\perp =\Span{W_2,W_3,W_4,W_5, Z_2,Z_3,Z_4,Z_5}.
\end{equation*}
In particular, the basis
\begin{equation*}
\begin{gathered}
W_2,Z_2, -W_5, -Z_5, \tfrac1{\sqrt3}W_3,\tfrac{1}{\sqrt3}Z_3,-\tfrac1{\sqrt3}W_4,-\tfrac{1}{\sqrt3}Z_4, 
\end{gathered}
\end{equation*}
which is orthonormal for the Killing form on the subspace \( \lie{p}=\so(4)^\perp \), determines an adapted frame for this \( \Sp(2)\Sp(1) \)-structure.
As in the previous cases, we let \( f^i \) denote its dual coframe, meaning \( f^1=w_2 \), etc.

\section{Cohomogeneity one \( \SU(3) \)-actions}
\label{sec:cohom-one-ac}

Each of the \( 8 \)-dimensional Wolf spaces \( M \) admits a cohomogeneity one \( \SU(3) \)-action, which was studied by Gambioli \cite{Gambioli:SU3action},
 see also \cite{Kuroki:SU3}. We summarise and elaborate on key facts below. 
In each case, the action comes from the embedding \( \SU(3)\subset \G \).
In order to explicitly describe the orbits of this \( \SU(3) \)-action, we choose an element
\( Z\in\mathfrak{p}\cap \su(3)^\perp \) and write \( \gamma(t)=\exp(tZ) \). 
Then the \( \SU(3) \)-orbits of \( \gamma(t) \) are given by
\begin{equation*}
\iota_t\colon \SU(3)\to M, \quad g\mapsto g\gamma(t)\Lie{K}.
\end{equation*}

As the quotient map
\begin{equation*}
\pi\colon G\to M, \quad g \mapsto g\Lie{K}
\end{equation*}
has \( \ker\pi_{*g}=L_{g*}(\lie{k}) \), we can identify \( T_{\gamma(t)\Lie{K}}M\) with \( L_{\gamma(t)*}(\mathfrak{p}) \), or simply \( \mathfrak{p} \) where left translation is then understood. It follows that we can identify \( \iota_{t*} \) with the map \( \su(3)\to \lie{p} \) given by
\begin{equation*}
X\mapsto[\Ad(\gamma(t)^{-1})(X)]_{\lie{p}}.
\end{equation*}
Since \( Z \) is orthogonal to \( \su(3)\subset\lie{g} \), it is clear from invariance of the Killing form that \( L_{\gamma(t)*}(Z) \) is orthogonal to the \( \SU(3) \)-orbit of \( \gamma(t) \) for all \( t \),
and altogether the cohomogeneity one action infinitesimally is described by the mapping
\begin{equation}
\label{eq:orbmap}
\su(3)\oplus \bR \to \lie{p},\quad X\mapsto [\Ad(\gamma(t)^{-1})(X)]_{\mathfrak{p}},\, \D{}{t}\mapsto Z.
\end{equation}

Now, given the adapted quaternion-k\"ahler frame on \( T_{e\LK}M \), we can use \eqref{eq:orbmap} 
to pull this back to \( \su(3)\oplus\bR \) and thereby get a description of the Wolf space structure
that is adapted to the cohomogeneity one setting.

Before doing so, we fix some conventions for \( \su(3) \): in the following \( e^1,\ldots, e^8 \) will always
denote a basis of \( \su(3)^* \) such that the following structure equations hold:
\begin{equation}
\label{eq:su3}
\begin{gathered}
de^1=-e^{23}-e^{45}+2 e^{67},\quad de^2=e^{13}+e^{46}-e^{57}-\sqrt{3} e^{58},\\
de^3=-e^{12}-e^{47}+\sqrt{3}e^{48}-e^{56},\quad de^4=e^{15}-e^{26}+e^{37}-\sqrt{3}e^{38},\\
de^5=-e^{14}+e^{27}+\sqrt{3} e^{28}+e^{36},\quad de^6=-2 e^{17}+e^{24}-e^{35},\\
de^7=2 e^{16}-e^{25}-e^{34},\quad de^8=- \sqrt{3}(e^{25}-e^{34}).
\end{gathered}
\end{equation}
In terms of matrices, we can express the dual basis \( e_1,\dotsc, e_8 \) as
\begin{equation*}
\begin{gathered}
e_1=E_{21}-E_{12},\, e_2=E_{31}-E_{13},\, e_3=E_{32}-E_{23},\\
e_4=-i(E_{23}+E_{32}),\, e_5=i(E_{13}+E_{31}),\, e_6=-i(E_{12}+E_{21}),\\
e_7=i(E_{22}-E_{11}),\, e_8=\tfrac i{\sqrt3}(2E_{33}-E_{11}-E_{22}).
\end{gathered}
\end{equation*}

In the following subsections, we shall show that, from the cohomogeneity one \( \SU(3) \) point
of view, Wolf's quaternion-k\"ahler manifolds arise by combining three basic models that
correspond to tubular neighbourhoods \( \G\times_{\LH}V \) 
of the relevant singular orbits \( \G\!/\!\LH \). These basic building blocks are
summarised in \cref{tab:models}, where \( \Sigma^2 \) is the irreducible \( 3 \)-dimensional 
representation of \( \SU(2) \), \( K=\Lambda^{2,0} \) denotes the \( 1 \)-dimensional representation of \( \LU(2) \)
corresponding to the determinant, and \( \bR^3 \) and \(\bC^2 \) are the standard representations
of \( \SO(3) \) and \( \LU(2) \), respectively.

\begin{table}[htp]
  \centering
  \begin{tabular}{C|C|C|C}
    \toprule
    \LH & \lie g/\lie h & V & \G/\LH\\
   \hline\vph
    \SU(2) & \bR\oplus \bH & \Sigma^2 & S^5\\
\hline\vph
\SO(3) & \odot^2_0\bR^3 & \bR^3 & L\\
\hline\vph
\LU(2) & [\![\Lambda^{1,0}K]\!] & \bC^2 & \pC(2)\\
    \bottomrule
  \end{tabular}
\vskip5pt
  \caption{The three Wolf space building blocks.}
  \label{tab:models}
\end{table}
\vskip-5pt

In \cref{tab:models}, \( L \) is the symmetric space \( \SU(3)\!/\!\SO(3) \) that parametrises special Lagrangian 
subspaces of \( \bR^6\cong\bC^3 \).

\subsection{Quaternionic projective plane}
\label{sec:SU3-HP2}

In order to give a cohomogeneity one description of the quaternionic projective plane, we 
start by fixing the embedding of \( \SU(3) \) in \( \Lie{Sp}(3) \) given via
\begin{equation*}
\SU(3)\subset\left\{\left(\begin{smallmatrix}X & 0 \\ 0 & (X^{-1})^T \end{smallmatrix}\right)\colon\, X\in \GL(3,\bC) \right\}\subset \Sp(3,\bC).
\end{equation*}
Correspondingly, we have the following description of \( \su(3) \) at the Lie algebra level:
\begin{equation*}
\su(3)=\Span{A_1-A_2,A_2-A_3,P_1,P_2,P_3,P_4,P_5,P_6}.
\end{equation*}
Now choose an element
\begin{equation*}
Z\in \lie{p}\cap\su(3)^{\perp}=\Span{Q_1,Q_2,Q_4,Q_5}.
\end{equation*}
We shall fix \( Z=Q_1 \). In these terms, the adapted quaternion-k\"ahler frame pulls back 
as follows.

\begin{lemma}
\label{lem:pull-back-adap-frame-HP2}
Under the mapping \eqref{eq:orbmap}, Wolf's coframe pulls back to the dual of \( \su(3)\oplus\bR \) to give
\begin{equation}
\label{eq:tdep_coframeHP2}
\begin{gathered}
\tilde e^1(t)=4\sqrt{2}\cos(2t) e^6,\, \tilde e^2(t)=-4\sqrt{2}\cos(2t) e^7,\\
\tilde e^3(t)=4\sqrt{2} dt,\, \tilde e^4=\tfrac{4\sqrt{6}}3\sin(2t) e^8,\\
\tilde e^5(t)=4\cos(t)(e^2+e^4),\, \tilde e^6(t)=4\cos(t)(e^3+e^5),\\
\tilde e^7(t)=4\sin(t)(e^2-e^4),\, \tilde e^8(t)=4\sin(t)(e^3-e^5).
\end{gathered}
\end{equation}
\end{lemma}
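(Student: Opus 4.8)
The plan is to compute the pullback of Wolf's coframe $f^1,\dots,f^8$ through the differential \eqref{eq:orbmap} by tracking how $\Ad(\gamma(t)^{-1})$ acts on the adapted basis of $\lie p$. Since $\gamma(t)=\exp(tQ_1)$ and $Z=Q_1$, the first concrete task is to express $Q_1$ as an explicit element of $\lsp(3)$ via the definitions of $Y_{i,j},Z_{i,j}$ and the basis relation $Q_{k+\ell-2}=Y_{k,\ell}-Z_{k,\ell}$, so that $Q_1$ corresponds to the indices giving the first quaternionic direction. With $Q_1$ in hand as a matrix, I would then exponentiate: because $Q_1$ lies in a small $\lsp(1)$-type subalgebra, $\Ad(\exp(tQ_1))$ will act on the eight-dimensional $\lie p$ essentially through $\cos$ and $\sin$ of multiples of $t$, and the factors $2t$ versus $t$ appearing in \eqref{eq:tdep_coframeHP2} reflect the different eigenvalues of $\ad_{Q_1}$ on the various $P_k,Q_k$.

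Next I would carry out the identification of the orbit map infinitesimally. By \eqref{eq:orbmap} the tangent map sends $X\in\su(3)$ to $[\Ad(\gamma(t)^{-1})X]_{\lie p}$ and sends $\partial/\partial t$ to $Z=Q_1$. Concretely, I would take each basis vector of $\su(3)$, namely $A_1-A_2,\,A_2-A_3,\,P_1,\dots,P_6$ written in the $\lsp(3)$ matrix realisation, apply $\Ad(\gamma(t)^{-1})=\Ad(\exp(-tQ_1))$, project onto $\lie p=\Span{P_1,P_2,P_4,P_5,Q_1,Q_2,Q_4,Q_5}$ using the Killing-orthogonal decomposition, and read off the components in the orthonormal frame $\tfrac1{4\sqrt2}P_1,\dots$ The dual statement then gives each $\tilde e^i(t)$ as a $t$-dependent combination of the $e^j$; the normalisation $f^i=4\sqrt2\,(\cdot)$ supplies the overall $4\sqrt2$ factors, and the $e^8$-direction picks up the anomalous coefficient $\tfrac{4\sqrt6}3$ because $e_8$ carries the $1/\sqrt3$ normalisation visible in its matrix form and maps into the $Q$-part with a $\sin(2t)$ weight.

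The only genuinely delicate bookkeeping is matching the abstract $\su(3)$ labels $e^1,\dots,e^8$, fixed by the structure equations \eqref{eq:su3} and the matrix basis $e_1,\dots,e_8$, against the $\lsp(3)$ images of $A_1-A_2,\,A_2-A_3,\,P_1,\dots,P_6$ under the chosen embedding $\SU(3)\subset\Sp(3)$; one must verify that the commutation relations of the matrices $e_k$ indeed reproduce \eqref{eq:su3}, since otherwise the assignment of $e^2,e^4$ (and $e^3,e^5$) to the symmetric/antisymmetric combinations in $\tilde e^5,\dots,\tilde e^8$ could be permuted. I expect this correspondence — pinning down which $P_k$ corresponds to which $e^j$ and with what sign — to be the main obstacle, and I would resolve it by directly computing the brackets $[e_k,e_\ell]$ from the given matrices and comparing with \eqref{eq:su3}.

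Finally, I would assemble the results. Once the action of $\Ad(\exp(-tQ_1))$ on $\lie p$ is diagonalised and the $\su(3)$-to-$\lsp(3)$ dictionary is fixed, each $\tilde e^i(t)$ follows by dualising the projection and inserting the frame normalisations; the appearance of $dt$ as $\tilde e^3(t)=4\sqrt2\,dt$ encodes that $\partial/\partial t\mapsto Z=Q_1$ lands precisely in the third frame direction. The computation is then routine linear algebra, and collecting the eight expressions yields exactly \eqref{eq:tdep_coframeHP2}.
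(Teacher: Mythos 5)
Your proposal is correct and follows essentially the same route as the paper: the paper's proof likewise fixes a basis $e_1,\dots,e_8$ of $\su(3)\subset\lsp(3)$ consistent with \eqref{eq:su3} (namely $e_1=A_1-A_2$, $e_2=\tfrac1{\sqrt2}(P_2-P_3)$, \dots, $e_8=\tfrac1{\sqrt3}(A_1+A_2-2A_3)$), computes the matrix of $\Ad(\exp(-tQ_1))$ restricted to $\su(3)\to\lie p$ in the adapted bases, and dualises. The bookkeeping issue you flag (pinning down the $\su(3)$-to-$\lsp(3)$ dictionary via the brackets) is exactly the step the paper handles by exhibiting that basis explicitly.
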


\begin{proof}
In order to write things consistently with the structure equations \eqref{eq:su3},
we fix on \( \su(3)\subset\lsp(3) \) the basis
\begin{equation*}
\begin{gathered}
e_1=A_1-A_2,\, e_2=\tfrac1{\sqrt2}(P_2-P_3),\, e_3=\tfrac1{\sqrt2}(P_5+P_6),\,e_4=\tfrac1{\sqrt2}(P_2+P_3),\\
e_5=\tfrac1{\sqrt2}(P_5-P_6),\, e_6=P_1,\,e_7=-P_4,\, e_8=\tfrac1{\sqrt3}(A_1+A_2-2A_3).
\end{gathered}
\end{equation*}

Computing the action of \( \Ad(\exp (-tQ_1)) \) with respect to the bases 
\begin{equation*}
e_1,\dotsc, e_8, \quad P_1,P_4,Q_1,Q_4, P_2,P_5,Q_2,Q_5
\end{equation*}
of \( \su(3) \) and \( \lie{p}\), respectively, we find that the map \eqref{eq:orbmap} is represented by the matrix 
\begin{equation*}
 \left(
\begin{smallmatrix}
0&0&0&0&0&\cos(2t)&0&0\\0&0&0&0&0&0&- \cos(2t)&0\\0&0&0&0&0&0&0&0\\0&0&0&0&0&0&0&\frac{1}{3}\sqrt{3} \sin(2t)\\0&\frac{1}{2}\cos(t) \sqrt{2}&0&\frac{1}{2}\cos(t) \sqrt{2}&0&0&0&0\\0&0&\frac{1}{2}\cos(t) \sqrt{2}&0&\frac{1}{2}\cos(t) \sqrt{2}&0&0&0\\0&\frac{1}{2}\sin(t) \sqrt{2}&0&-\frac{1}{2}\sin(t) \sqrt{2}&0&0&0&0\\0&0&\frac{1}{2}\sin(t) \sqrt{2}&0&-\frac{1}{2}\sin(t) \sqrt{2}&0&0&0
\end{smallmatrix}
\right).
\end{equation*}
Considering its transpose, and recalling that \( \D{}{t} \) maps to \(
Q_1 \), we see that the adapted coframe \( f^1,\ldots, f^8 \) pulls
back to \( \tilde e^1(t),\dots, \tilde e^8(t) \) to give the stated
result.
\end{proof}

Our computation confirms, more directly, the following result that is indicated in \cite{Gambioli:SU3action}.

\begin{proposition}
\label{prop:cohom1-pH2}
The Wolf space \( \pH(2) \) can be viewed as a cohomogeneity one manifold obtained by gluing together the disc bundles over the singular orbits \( \pC(2)=\SU(3)\!/\!\Lie{U}(2) \) and \( S^5=\SU(3)\!/\!\SU(2) \). Each principal orbit is a copy of the exceptional Aloff-Wallach space \( N^{1,-1}\cong N^{1,0} \).
\end{proposition}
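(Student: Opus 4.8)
The plan is to extract all the geometric information from the explicit $t$-dependent coframe \eqref{eq:tdep_coframeHP2} of \cref{lem:pull-back-adap-frame-HP2}, which already encodes the cohomogeneity one structure completely. First I would determine the range of the parameter $t$. Because the forms $\tilde e^i(t)$ are built from $\cos(2t)$, $\sin(2t)$, $\cos t$ and $\sin t$, the principal (maximal rank) orbits occur where all eight covectors are independent, and the singular orbits occur at the endpoints where the rank drops. I expect the natural interval to be $t\in[0,\pi/2]$ (or a half of it), with degeneration at each end. At an interior point the span of $\{\tilde e^1,\dots,\tilde e^8\}$ together with the structure equations \eqref{eq:su3} should be a full-rank coframe on a $7$-dimensional homogeneous space times the $dt$-direction, so that the generic orbit is $7$-dimensional, confirming cohomogeneity one.

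Next I would identify the two singular orbits by seeing which covectors vanish at the endpoints. At $t=0$ one reads off that $\tilde e^4$, $\tilde e^7$, $\tilde e^8$ (the $\sin$-type terms) vanish, so the tangent space to the orbit is cut out by the remaining equations; the isotropy subalgebra is the annihilator, and I would match it against the list $\su(2),\so(3),\lu(2)$ in \cref{tab:models}. The $\SU(2)=S^5$ case and the $\LU(2)=\pC(2)$ case are exactly the entries whose normal representations $\Sigma^2$ and $\bC^2$ have the right dimensions $3$ and $4$; checking the dimension of the vanishing subspace (three forms vanish at one end, four at the other, matching $S^5$ and $\pC(2)$ respectively) pins down which singular orbit sits at which endpoint. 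Then $\pC(2)=\SU(3)/\LU(2)$ and $S^5=\SU(3)/\SU(2)$ are recognised directly, and the tubular-neighbourhood description $\G\times_{\LH}V$ from \cref{tab:models} identifies the disc bundles being glued.

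For the principal orbit, I would identify the generic isotropy group as the intersection of the two singular isotropy groups, a circle, so that the orbit is $\SU(3)/S^1$, an Aloff--Wallach space $N^{k,\ell}$. To fix the weights $(k,\ell)$, I would compute how the stabilising circle sits inside the maximal torus of $\SU(3)$ using the explicit matrix generators $e_1,\dots,e_8$ for $\su(3)$ given before \eqref{eq:su3}; the isotropy circle is generated by the element of the Cartan subalgebra left fixed by $\Ad(\gamma(t)^{-1})$ acting on $\lie p$, which one reads from the kernel of the orbit map \eqref{eq:orbmap}. This should give the weights $(1,-1)$, and the standard diffeomorphism $N^{1,-1}\cong N^{1,0}$ (a well-known coincidence for these particular weights) completes the claim.

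The main obstacle will be the bookkeeping in the last paragraph: correctly pinning down the isotropy circle and its embedding so as to read off the Aloff--Wallach weights, since this requires tracking the chain of identifications $\su(3)\subset\lsp(3)$, the adapted basis used in the proof of \cref{lem:pull-back-adap-frame-HP2}, and the matrix model $e_1,\dots,e_8$, all at once. The orbit-type analysis at the endpoints (which forms vanish, hence which isotropy algebra) is essentially a rank computation and should be routine; the delicate point is matching conventions consistently so that the circle really is $N^{1,-1}$ and not some other $N^{k,\ell}$.
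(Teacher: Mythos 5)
Your strategy is the same as the paper's: read off, from the pulled-back coframe \eqref{eq:tdep_coframeHP2}, the subalgebra of $\su(3)$ annihilated at generic $t$ and at the two endpoints, and match these against $\lu(1)$, $\lu(2)$ and $\su(2)$. However, there is one genuine logical gap. Determining the isotropy \emph{algebras} only pins down the identity components of the stabilisers, so a priori the singular orbits could be finite quotients of $\pC(2)$ and $S^5$ rather than these spaces themselves. The paper closes this by observing that the singular orbits have codimension strictly less than $6$, so the singular stabilisers are automatically connected by a result of Hoelscher, after which Lie-algebra bookkeeping suffices. Your proposal never addresses connectedness, and without it the identification of the disc bundles is incomplete.

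Separately, your endpoint analysis is garbled in a way that would assign the orbits to the wrong ends if carried out as written. The singular values are $t=0$ and $t=\pi/4$ (not $\pi/2$). At $t=0$ the three forms $\tilde e^4,\tilde e^7,\tilde e^8$ vanish on the orbit, so together with the $dt$-direction the normal space is $4$-dimensional: the orbit is the $4$-dimensional $\pC(2)$, with annihilated algebra the $\lu(2)$ spanned by $e_1,e_8,e_2-e_4,e_3-e_5$. At $t=\pi/4$ only $\tilde e^1,\tilde e^2$ vanish, the orbit is $5$-dimensional, i.e.\ $S^5$, with annihilated algebra the $\su(2)$ spanned by $e_1,e_6,e_7$. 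Your matching of ``three vanishing forms'' with $S^5$ (normal dimension $3$) forgets that $dt$ also contributes to the normal space. The remaining steps --- principal isotropy $\lu(1)=\Span{e_1}$, and conjugating $e_1=E_{21}-E_{12}$ into the diagonal torus to obtain weights $(1,-1,0)$ and hence $N^{1,-1}\cong N^{1,0}$ --- are sound and consistent with the paper, which simply asserts the principal orbit without spelling out the conjugation.
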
 

\begin{proof}
The point is to identify the principal and singular stabilisers; the latter appear at \( t=0 \) and \( t=\pi/4 \).
Since the singular orbits have codimension strictly smaller than \( 6 \), both singular stabilisers
are connected \cite[Corollary 1.9]{Hoelscher:cohom1}. In particular, it suffices to work at the Lie algebra level, since connected subgroups of
\( \SU(3) \) are in one-to-one correspondence with subalgebras of \( \su(3) \). 
It is worthwhile making this more explicit by identifying the Lie algebras of the principal and singular stabilisers.

Regarding the principal orbits, we observe that the coframe
\eqref{eq:tdep_coframeHP2} for generic \( t \) annihilates the \(
\lu(1) \) spanned by \( e_1 \).  When \( t=0 \), the coframe is the
annihilator of a Lie algebra \( \lu(2) \) spanned by the four elements \(
e_1, e_8 \), \( e_2-e_4, e_3-e_5 \).  Finally, at \( t=\pi/4 \) the subspace
annihilated is the \( \su(2) \) spanned by \( e_1, e_6, e_7 \).
\end{proof}

For later reference, let us emphasise that the tangent space of the open set corresponding to the principal orbits 
at each point decomposes as the \( \LU(1) \)-representation 
\begin{equation}
\label{eq:tspace-prc-orb}
\bR^8\cong2\bR\oplus 2V_1\oplus V_2,
\end{equation}
where \( V_k \) is the irreducible \( 2 \)-dimensional representation on which the principal \( \LU(1) \) acts via matrices 
of the form
\begin{equation*}
\left(
\begin{smallmatrix}
\cos(k\theta) & \sin(k\theta)\\
-\sin(k\theta) & \cos(k\theta) 
\end{smallmatrix}
\right);
\end{equation*}
at the infinitesimal level, this follows directly from \eqref{eq:su3}.

As a final remark, note that \( \pH(2) \) comes with a \( \LU(1) \)-action, generated by the diagonal \( \LU(1) \) in \( \LU(3)\subset\Sp(3) \), that 
commutes with the action of \( \SU(3) \). Clearly, this circle action generates a Killing vector field. Explicitly, this action is generated by \( X=\sqrt3e_8 + 3A_3 \) and 
in our cohomogeneity one framework, it reads
\begin{equation*}
\begin{split}
\iota_t(g)\mapsto \exp(sX)\iota_t(g) &= g\exp(sX)\gamma(t)K= g\exp(\sqrt3 s e_8)\exp(3sA_3)\gamma(t)\LK\\
&= g\exp(\sqrt3 s e_8)\gamma(t)\exp(3sA_3)\LK= \iota_t(g\exp(\sqrt3 s e_8)),
\end{split}
\end{equation*}
where we have used the fact that \( A_3 \) is an element of \( \lsp(2)\subset\lsp(2)\oplus\lsp(1) \) that commutes with \( Z=Q_1 \). Therefore, the Killing vector field \( X \) can be identified with the left-invariant vector field \( \sqrt3 e_8 \) on \( \SU(3) \).

\subsection{Complex Grassmannian}
\label{sec:SU3-Gr2-C4}

In order to get an explicit description of the cohomogeneity one nature of \( \Gr_2(\bC^4) \), we fix the copy
of \( \SU(3) \) which comes from the usual embedding 
\begin{equation*}
 \SU(3)\cong\left\{\left(\begin{smallmatrix} A & \\ & 1\end{smallmatrix}\right)\colon A\in\SU(3)\right\}\subset\SU(4) .
\end{equation*} 
At the level of Lie algebras, this means that we are working with the copy \( \su(3)\subset\su(4) \) spanned by
\begin{equation*}
\begin{gathered}
e_1=A_1+A_2, e_2=\frac1{\sqrt2}(C_1+C_4),e_3=\frac1{\sqrt2}(B_1+B_4),e_4=\frac1{\sqrt2}(C_1-C_4),\\
e_5=\frac1{\sqrt2}(B_1-B_4),e_6=C_2,e_7=-B_2,e_8=\frac1{\sqrt3}(A_1-A_2);
\end{gathered}
\end{equation*}
this choice of a basis is consistent with \eqref{eq:su3}.
Proceeding as before, we pick an element
\begin{equation*}
Z\in\lie{p}\cap\su(3)^{\perp}=\Span{C_5,B_5,C_6,B_6},
\end{equation*}
which, for concreteness, we shall fix to \( Z=C_5 \). 
Computations as in the proof of \cref{lem:pull-back-adap-frame-HP2}
then give:

\begin{lemma}
Under the mapping \eqref{eq:orbmap}, Wolf's coframe pulls back to the dual of \( \su(3)\oplus\bR \) to give
\begin{equation*}
\begin{gathered}
\tilde e^1(t)=2\sqrt2\cos(t)(e^2+e^4),\, \tilde e^2(t)=2\sqrt2\cos(t)(e^3+ e^5),\,\tilde e^3(t)=4dt,\\
\tilde e^4(t)=-\tfrac{4\sqrt3}{3}\sin(2 t)e^8,\quad \tilde e^5(t)=4e^6,\quad\tilde e^6(t)=-4e^7,\\
\tilde e^7(t)=2\sqrt2\sin(t)(-e^2+e^4),\,\tilde e^8(t)=2\sqrt2\sin(t)(e^3-e^5).
\end{gathered}
\end{equation*}
\end{lemma}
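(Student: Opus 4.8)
The proof will follow the template of \cref{lem:pull-back-adap-frame-HP2} verbatim, with the \( \su(4) \) data replacing the \( \lsp(3) \) data. The plan is to compute the matrix of the infinitesimal orbit map \eqref{eq:orbmap} for \( \gamma(t)=\exp(tC_5) \) in the chosen bases, and then transpose it to read off the pulled-back coframe. Concretely, I would start from the basis \( e_1,\dots,e_8 \) of \( \su(3)\subset\su(4) \) fixed above (which is consistent with \eqref{eq:su3}), the splitting frame \( C_1,B_1,C_2,B_2,C_5,B_5,C_6,B_6 \) of \( \lie{p} \), and the adapted coframe \( f^1,\dots,f^8 \) with \( f^1=4c_1 \), and so forth.

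The core of the argument is evaluating \( \Ad(\exp(-tC_5))=\exp(-t\ad_{C_5}) \) on each \( e_j \). In the complexification one has \( C_5=X_5-Y_5=E_{2,4}-E_{4,2} \), so \( C_5 \) is the infinitesimal generator of a rotation in the \( 2 \)--\( 4 \) coordinate plane of \( \bC^4 \); hence \( \ad_{C_5} \) is read off directly from matrix commutators and iterated. Its eigenvalues are \( 0 \) and \( \pm i,\pm 2i \), and this is exactly what generates the trigonometric dependence in the answer: the \( \pm i \) eigenspaces contribute the \( \cos(t),\sin(t) \) terms appearing in \( \tilde e^1,\tilde e^2,\tilde e^7,\tilde e^8 \), the \( \pm 2i \) eigenspace contributes the \( \tfrac{4\sqrt3}{3}\sin(2t) \) term in \( \tilde e^4 \), and the kernel of \( \ad_{C_5} \) accounts for the \( t \)-independent entries \( \tilde e^5=4e^6 \) and \( \tilde e^6=-4e^7 \). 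Having computed \( \exp(-t\ad_{C_5})(e_j) \), I would project each onto \( \lie{p} \), assemble the coefficients into the \( 8\times 8 \) matrix representing \eqref{eq:orbmap} (recalling that \( \partial/\partial t\mapsto Z=C_5 \)), transpose it, and apply it to \( f^1,\dots,f^8 \) to obtain the stated expressions for \( \tilde e^1(t),\dots,\tilde e^8(t) \).

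The one genuine obstacle is bookkeeping rather than conceptual: one must correctly carry the factors of \( 4 \) and \( \sqrt2 \) that enter both the normalisation of \( e_1,\dots,e_8\subset\su(4) \) and the adapted coframe \( f^i \), and one must take the \( \lie{p} \)-projection \emph{after} exponentiating, since \( \ad_{C_5} \) does not preserve \( \lie{p} \). With the brackets of \( C_5 \) against the \( e_j \) in hand, the remaining steps are mechanical, and—as in the preceding case—are most safely confirmed by a symbolic computation.
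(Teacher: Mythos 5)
Your proposal is correct and is essentially the paper's own argument: the paper gives no separate proof for this lemma beyond the remark that ``computations as in the proof of \cref{lem:pull-back-adap-frame-HP2} then give'' the stated coframe, and your outline (compute \( \Ad(\exp(-tC_5)) \) on the chosen basis of \( \su(3) \), project onto \( \lie{p} \) afterwards, assemble the matrix of \eqref{eq:orbmap} and transpose) is precisely that computation, with the eigenvalue analysis of \( \ad_{C_5} \) correctly accounting for the \( \cos t \), \( \sin t \), \( \sin 2t \) and constant entries.
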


Using these observations, we have the following result.

\begin{proposition}
The Wolf space \( \Gr_2(\bC^4) \) can be viewed as a cohomogeneity one manifold obtained by gluing together the disc bundles over two copies of the singular orbit \( \pC(2)=\SU(3)\!/\!\Lie{U}(2) \). Each principal orbit is a copy of the exceptional Aloff-Wallach space \( N^{1,0} \cong N^{1,-1} \).
The form of the metric in the two bundles is the same up to the identification
\begin{equation*}
e_1\!\mapsto-e_1,\,e_2\!\mapsto e_4,\,e_3\!\mapsto-e_5,\,e_4\!\mapsto-e_2,\,e_5\!\mapsto e_3,\,
e_6\!\mapsto e_6,\,e_7\!\mapsto-e_7,\,e_8\!\mapsto e_8.
\end{equation*}
\end{proposition}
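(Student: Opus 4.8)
The plan is to follow the same template used for $\pH(2)$ in \cref{prop:cohom1-pH2}, identifying the principal and singular stabilisers from the pulled-back coframe. First I would locate the singular orbits: since the disc bundles correspond to the values of $t$ where the orbit map \eqref{eq:orbmap} degenerates, I inspect the preceding lemma's coframe $\tilde e^1(t),\dots,\tilde e^8(t)$ and look for the parameter values at which it fails to span. The factors $\cos(t)$, $\sin(t)$ and $\sin(2t)$ vanish at $t=0$ and $t=\pi/2$, so these are the two singular values; at generic $t$ the coframe annihilates only the $\lu(1)$ spanned by $e_1$, giving a principal stabiliser with Lie algebra $\lu(1)$ and hence principal orbit $\SU(3)/\LU(1)$, an Aloff--Wallach space. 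I would then verify, exactly as in \cref{sec:SU3-HP2}, that the resulting weights of the principal $\LU(1)$-action reproduce $N^{1,0}\cong N^{1,-1}$.

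Next I would compute the two singular stabilisers. At $t=0$ the terms containing $\cos(t)$ survive while those with $\sin(t)$ and $\sin(2t)$ drop out; the annihilator of the surviving coframe is a four-dimensional subalgebra, which I expect to be a copy of $\lu(2)$ spanned by $e_1,e_8$ together with the two combinations $e_2-e_4$ and $e_3-e_5$, giving singular orbit $\pC(2)=\SU(3)/\LU(2)$. At $t=\pi/2$ the roles of sine and cosine are exchanged, and I would similarly identify a second copy of $\lu(2)$, now spanned by $e_1,e_8$ and $e_2+e_4$, $e_3+e_5$. Both singular orbits thus have codimension $4<6$, so by \cite[Corollary 1.9]{Hoelscher:cohom1} the singular stabilisers are automatically connected and it suffices to argue at the Lie algebra level, as before. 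This establishes the gluing description as two disc bundles over copies of $\pC(2)$.

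Finally I would establish the symmetry statement: that the metric in the two half-tubes agrees after the stated relabelling $e_1\mapsto-e_1$, $e_2\mapsto e_4$, and so on. The natural strategy is to exhibit an involution of the cohomogeneity one structure interchanging the two singular orbits. Concretely, the substitution $t\mapsto \pi/2-t$ sends $\cos(t)\leftrightarrow\sin(t)$ and fixes $\sin(2t)$ (up to sign through $dt\mapsto -dt$), and I would check that combining this with the given linear map on $\su(3)$ carries each $\tilde e^i(t)$ to $\pm\tilde e^{\rho(i)}(\pi/2-t)$ for a suitable permutation $\rho$, so that the induced metric $\sum_i (\tilde e^i)^2$ is preserved. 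The reflection $t\mapsto\pi/2-t$ must be compatible with the chosen orientation and with the structure equations \eqref{eq:su3}, which is where the precise signs in the relabelling are forced.

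The main obstacle is the last step: verifying that the explicit linear map on $\su(3)$ is genuinely an automorphism of the Lie algebra respecting \eqref{eq:su3}, and that it intertwines the two singular-orbit isotropy representations correctly. Matching the signs and the pairing of basis elements so that the pulled-back coframes coincide after $t\mapsto\pi/2-t$ is a delicate bookkeeping exercise, since an arbitrary sign error would break either the orientation or the identification of the two $\lu(2)$'s. The stabiliser computations themselves are routine linear algebra analogous to \cref{prop:cohom1-pH2}, so I expect the effort to concentrate on confirming that the stated relabelling is precisely the one induced by the reflection symmetry.
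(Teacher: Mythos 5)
Your proposal is correct and follows the paper's argument almost exactly: the stabiliser identifications (principal $\lu(1)=\Span{e_1}$; singular $\lu(2)$'s spanned by $e_1,e_8,e_2\mp e_4,e_3\mp e_5$ at $t=0$ and $t=\pi/2$), the appeal to \cite[Corollary 1.9]{Hoelscher:cohom1} for connectedness, and the reduction to Lie algebra level all match the paper's proof. The only difference is in the final step. You propose to verify the identification by direct bookkeeping, checking that the reflection $t\mapsto\pi/2-t$ combined with the relabelling carries each $\tilde e^i(t)$ to $\pm\tilde e^{\rho(i)}(\pi/2-t)$, and you correctly flag that establishing the relabelling is a Lie algebra automorphism is the delicate point. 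The paper short-circuits precisely this: it observes that the stated relabelling is the inner automorphism $\Ad(g)$ for the explicit unitary matrix $g=\left(\begin{smallmatrix}0 & i & 0 \\ i & 0 & 0\\ 0 & 0 & 1\end{smallmatrix}\right)\in\SU(3)$, so it is automatically an automorphism preserving the Killing form, and one need only note that it preserves the principal $\lu(1)$ and the quaternion-K\"ahler metric while swapping the two copies of $\lu(2)$. Your route reaches the same conclusion but at the cost of more computation; recognising the map as conjugation by an explicit group element is the cleaner way to dispose of the sign-matching worry you raise.
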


\begin{proof}
The arguments follow those of \cref{prop:cohom1-pH2}. For convenience, let us write
down the Lie algebras of the stabilisers. For the principal orbits, we have \( \lu(1) \),
corresponding to \( e_1 \). At \( t=0 \) we see that the algebra annihilated by 
the pulled back coframe is the copy of \( \lu(2) \) spanned by \( e_1, e_8,e_2-e_4,e_3-e_5 \).
Finally, at \( t=\pi/2 \) the Lie algebra of the singular stabiliser is spanned
by the \( \lu(2) \) determined by \( e_1, e_8,e_2+e_4,e_3+e_5 \).

The identification given is the inner automorphism of \( \su(3) \) obtained by conjugating 
with the matrix
\( \left(\begin{smallmatrix}0 & i & 0 \\i & 0 & 0\\ 0 & 0 & 1\end{smallmatrix}\right) \). It
preserves the principal \( \lu(1) \) and the quaternion-k\"ahler metric. The second statement follows.
\end{proof}

As for \( \pH(2) \), note that the complex Grassmannian comes with a \( \LU(1) \)-action, now generated by the diagonal \( \LU(1) \) in \( \LU(3)\subset\SU(4) \), commuting with the \( \SU(3) \)-action. Again this circle action clearly generates a Killing vector field which
in our cohomogeneity one framework can be identified with \( e_8\in\su(3) \). 
In this case, \( X=A_1+2A_2+3A_3 \) is the sum of \( -2\sqrt3e_8 \) with \( 3A_1+3A_3 \), which commutes with \( Z=C_5 \) and is contained in \( \LK \).

\subsection{The exceptional Wolf space}
\label{sec:SU3-exc-Wolf-space}

The quaternion-k\"ahler structure on the space \( \G_2\!/\!\SO(4) \) admits a cohomogeneity one description that comes from the embedding of \( \SU(3) \) in \( \G_2 \) as the group generated by the long roots,
cf.\ \cite{Kobak-S:wolfspace}. At the Lie algebra level, we have that \( \su(3) \) is generated by 
\begin{equation*}
\begin{gathered}
A_1+A_2,\tfrac1{\sqrt2}(W_2+W_6),\tfrac1{\sqrt2}(Z_6-Z_2),\tfrac1{\sqrt2}(W_2-W_6),\\
-\tfrac1{\sqrt2}(Z_2+Z_6),W_5,-Z_5,-\tfrac1{\sqrt3}(A_1+3A_2);
\end{gathered}
\end{equation*}
this choice of basis is consistent with \eqref{eq:su3}. In order to study the orbits of the \( \SU(3) \)-action, we choose an element
\begin{equation*}
 Z\in\mathfrak{p}\cap \su(3)^\perp=\Span{W_3,W_4,Z_3,Z_4}. 
\end{equation*}
Specifically, we fix \( Z=W_3 \).
Then computations, completely similar to those in the proof of \cref{lem:pull-back-adap-frame-HP2},
give:

\begin{lemma}
Under the mapping \eqref{eq:orbmap}, Wolf's coframe pulls back to the dual of \( \su(3)\oplus\bR \) to give
\begin{equation*}
\begin{gathered}
\tilde e^1(t)=\tfrac{\sqrt2}{2}(\cos(t)^{3}-\sin(t)^{3}) e^2+\tfrac{\sqrt{2}}{2}(\cos(t)^{3}+\sin(t)^{3})e^4,\\
\tilde e^2(t)-\tfrac{\sqrt2}{2}(\cos(t)^{3}-\sin(t)^{3})e^3-\tfrac{\sqrt2}{2}(\cos(t)^{3}+\sin(t)^{3}) e^5,\\
\tilde e^3(t)=- e^6,\,\tilde e^4(t)=e^7,\,\tilde e^5(t)=\sqrt{3}dt,\, \tilde e^6(t)=-\sin(2t)e^8,\\
\tilde e^7(t)=-\sqrt{\tfrac{3}{8}}\sin(2t)(\sin(t)-\cos(t)) e^2- \sqrt{\tfrac{3}{8}}\sin(2t) (\sin(t)+\cos(t))e^4,\\
\tilde e^8(t)=-\sqrt{\tfrac{3}{8}}\sin(2t)(\sin(t)-\cos(t))e^3- \sqrt{\tfrac{3}{8}}\sin(2t)(\sin(t)+\cos(t))e^5.
\end{gathered}
\end{equation*}
\end{lemma}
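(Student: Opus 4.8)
The plan is to follow verbatim the template of \cref{lem:pull-back-adap-frame-HP2}. With the \( \su(3) \)-basis \( e_1,\dots,e_8 \) recorded above and the Killing-orthonormal frame \( W_2,Z_2,-W_5,-Z_5,\tfrac1{\sqrt3}W_3,\tfrac1{\sqrt3}Z_3,-\tfrac1{\sqrt3}W_4,-\tfrac1{\sqrt3}Z_4 \) of \( \lie{p} \), I would represent the infinitesimal orbit map \eqref{eq:orbmap} as an \( 8\times 8 \) matrix: for each \( a \) compute \( \Ad(\exp(-tW_3))e_a=\exp(-t\,\ad W_3)e_a \) and drop the \( \lk \)-component. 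Transposing this matrix and recalling that \( \partial/\partial t\mapsto Z=W_3=\sqrt3\cdot\tfrac1{\sqrt3}W_3 \) then yields \( \tilde e^1(t),\dots,\tilde e^8(t) \); in particular the assignment of \( \partial/\partial t \) to the fifth frame vector already forces \( \tilde e^5(t)=\sqrt3\,dt \). Orthogonality of \( W_3 \) to \( \su(3) \) together with invariance of the Killing form guarantees, exactly as in the general discussion of \cref{sec:cohom-one-ac}, that the \( dt \)-direction stays normal to every orbit, so no cross terms between the two summands of \( \su(3)\oplus\bR \) arise.

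The substance is the exponentiation of \( \ad W_3 \). Writing \( W_3=X_3-Y_3 \) with \( Y_3=X_{-\alpha_3} \), one has \( W_3=X_{\alpha_3}-X_{-\alpha_3} \), so \( \ad W_3 \) shifts root spaces by \( \pm\alpha_3 \) and preserves each \( \alpha_3 \)-string. I would therefore decompose the domain into these \( \ad W_3 \)-invariant strings and exponentiate block by block, reading off the structure constants \( N_{\alpha,\gamma} \) from the root data \( \alpha_i=-\beta_i \) of \cite{Fulton-H:Rep}. Since \( \alpha_3 \) is a short root, the strings come in three flavours: trivial ones such as the string through \( \alpha_5 \) (whence the constant entries \( \tilde e^3=-e^6 \) and \( \tilde e^4=e^7 \)); the \( \su(2) \) generated by \( \{X_{\pm\alpha_3},H_{\alpha_3}\} \), on which \( \ad W_3 \) has eigenvalues \( 0,\pm 2i \) (the source of the \( \sin(2t)\,e^8 \) term); and the long string \( \alpha_2,-\alpha_1,-\alpha_4,-\alpha_6 \).

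The main obstacle is this last, length-four string, which is a feature special to \( \g_2 \): on the corresponding spin-\( \tfrac32 \) block \( \ad W_3 \) has eigenvalues \( \pm i,\pm 3i \), so its exponential mixes frequencies \( 1 \) and \( 3 \). This is precisely what produces the cubic coefficients \( \cos^3 t\pm\sin^3 t \), and, after combination with the frequency-\( 2 \) rotation of the \( \su(2) \) block, the \( \sin(2t)(\sin t\pm\cos t) \) entries of \( \tilde e^7,\tilde e^8 \); rewriting \( \cos^3 t=\tfrac14(3\cos t+\cos 3t) \) and \( \sin^3 t=\tfrac14(3\sin t-\sin 3t) \) makes the frequency content explicit and is the form in which the block exponential naturally appears. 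Here the relevant structure constants reach magnitude \( 3 \), and controlling their signs is the only genuinely delicate point; once the spin-\( \tfrac32 \) block is diagonalised correctly, the remaining entries are the elementary \( \cos t,\sin t \) terms, and transposition delivers the stated coframe.
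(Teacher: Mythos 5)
Your proposal is correct and follows exactly the route the paper takes: the paper's own justification is simply ``computations, completely similar to those in the proof of \cref{lem:pull-back-adap-frame-HP2}'', i.e.\ write the orbit map \eqref{eq:orbmap} as a matrix by computing \( \Ad(\exp(-tW_3)) \) on the chosen \( \su(3) \)-basis, project to \( \lie p \), and transpose. Your organisation of the exponentiation by \( \alpha_3 \)-root strings (the trivial string through \( \alpha_5 \), the frequency-\( 2 \) block containing \( Z_3 \), and the length-four string \( \alpha_2,-\alpha_1,-\alpha_4,-\alpha_6 \) responsible for the \( \cos^3 t\pm\sin^3 t \) coefficients) is a sound and accurate way to carry out that computation.
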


With the above observations, we have the following result that
confirms statements from \cite{Gambioli:SU3action}:

\begin{proposition}
\label{prop:cohom1-excep-Wolf}
The exceptional Wolf space \( \G_2\!/\!\SO(4) \) can be viewed as a cohomogeneity one manifold obtained by gluing together disc bundles over the singular orbits \( \pC(2)=\SU(3)\!/\!\LU(2) \) and \( L=\SU(3)\!/\!\SO(3) \). Each principal orbit is an exceptional Aloff-Wallach space \( N^{1,0} \cong N^{1,-1}\). 
\end{proposition}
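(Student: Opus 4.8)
The plan is to follow the template of \cref{prop:cohom1-pH2} essentially verbatim: read off the principal and singular isotropy subalgebras directly from the pulled-back coframe of the preceding lemma, and invoke \cite[Corollary 1.9]{Hoelscher:cohom1} to conclude that the singular stabilisers are connected (their orbits will have codimension $4$ and $3$, both less than $6$), so that the Lie algebra data determines the subgroups. The whole argument then reduces to evaluating the coframe at the relevant values of $t$ and recognising the annihilated subalgebras of $\su(3)$.

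First I would locate the singular orbits by finding where the coframe, restricted to $\su(3)$, drops rank. Since no $\tilde e^i$ involves $e^1$, the generator $e_1$ lies in every isotropy algebra; and as $\tilde e^3=-e^6$, $\tilde e^4=e^7$ together with the radial $\tilde e^5=\sqrt3\,dt$ never degenerate, any loss of rank must come from the two pairs $\tilde e^1,\tilde e^7\in\Span{e^2,e^4}$ and $\tilde e^2,\tilde e^8\in\Span{e^3,e^5}$. Computing the $2\times2$ determinant of either pair gives a multiple of $\sin(2t)\cos(2t)$, whose first two zeros $t=0$ and $t=\pi/4$ bound the fundamental interval $[0,\pi/4]$; for interior $t$ the coframe has full rank, so the principal isotropy is exactly the $\lu(1)$ spanned by $e_1$.

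Next I would evaluate at the two endpoints. At $t=0$ the surviving covectors on $\su(3)$ span $\Span{e^2+e^4,\,e^3+e^5,\,e^6,\,e^7}$, whose annihilator is the $\lu(2)$ generated by $e_1,e_8,e_2-e_4,e_3-e_5$, giving the singular orbit $\pC(2)=\SU(3)/\LU(2)$ exactly as for the previous Wolf spaces. At $t=\pi/4$ the two pairs collapse, $\tilde e^1\parallel\tilde e^7$ and $\tilde e^2\parallel\tilde e^8$, so the surviving covectors span the five-dimensional $\Span{e^4,e^5,e^6,e^7,e^8}$ and the annihilator is the three-dimensional $\Span{e_1,e_2,e_3}$; from $e_1=E_{21}-E_{12}$, $e_2=E_{31}-E_{13}$, $e_3=E_{32}-E_{23}$ one reads off the standard $\so(3)\subset\su(3)$, giving the singular orbit $L=\SU(3)/\SO(3)$. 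Finally, conjugating $\exp(\theta e_1)$ to $\exp(\theta\operatorname{diag}(i,-i,0))=\operatorname{diag}(e^{i\theta},e^{-i\theta},1)$ identifies each principal orbit with $N^{1,-1}\cong N^{1,0}$.

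The step I expect to be the main obstacle is the singular orbit at $t=\pi/4$. For $\pH(2)$ and $\Gr_2(\bC^4)$ the second singular orbit arose simply because a factor $\sin(2t)$ forced several coframe entries to vanish, producing a codimension-four orbit with $\lu(2)$ isotropy. Here, by contrast, the degeneration is the more delicate \emph{collapse} of the independent pairs $(\tilde e^1,\tilde e^7)$ and $(\tilde e^2,\tilde e^8)$ onto single covectors, which lowers the codimension to three and jumps the isotropy from $\lu(1)$ to $\so(3)$. The care required is to verify that exactly five covectors survive and to recognise their three-dimensional annihilator as the $\so(3)$ of real antisymmetric matrices, so that the orbit is indeed the special Lagrangian Grassmannian $L$ of \cref{tab:models}, whose slice is the standard $\SO(3)$-representation $\bR^3$, rather than mistaking this collapse for the vanishing pattern seen in the other two cases.
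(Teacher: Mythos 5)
Your proposal is correct and follows essentially the same route as the paper: both read off the isotropy subalgebras as annihilators of the pulled-back coframe at generic $t$, at $t=0$ and at $t=\pi/4$ (obtaining $\lu(1)=\Span{e_1}$, the $\lu(2)$ spanned by $e_1,e_8,e_2-e_4,e_3-e_5$, and the $\so(3)=\Span{e_1,e_2,e_3}$ respectively), and both reduce to the Lie-algebra level via the connectedness of the singular stabilisers from \cite[Corollary 1.9]{Hoelscher:cohom1} as in \cref{prop:cohom1-pH2}. Your extra detail on the rank-drop locus $\sin(2t)\cos(2t)=0$ and the collapse of the pairs $(\tilde e^1,\tilde e^7)$, $(\tilde e^2,\tilde e^8)$ at $t=\pi/4$ is accurate and simply makes explicit what the paper leaves implicit.
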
 

\begin{proof}
Again the arguments are like those of \cref{prop:cohom1-pH2}, but for convenience we spell out
the Lie algebras of the stabilisers. For the principal orbits, we have \( \lu(1) \),
corresponding to \( e_1 \). At \( t=0 \) we see that the algebra annihilated by 
the pulled back coframe is the copy \( \lu(2) \) spanned by \( e_1, e_8,e_2-e_4,e_3-e_5 \).
Finally, at \( t=\pi/4 \) the Lie algebra of the singular stabiliser is spanned
by the copy \( \so(3) \) determined by \(e_1,e_2,e_3 \).
\end{proof}

In contrast with the quaternionic projective plane and the complex Grassmannian,
\( \G_2\!/\!\SO(4) \) clearly does not admit a (global) circle action from a commuting \( \LU(1)\subset\G_2 \)
(\( \SU(3)\subset\G_2 \) is a maximal connected subgroup). 
However, the open set corresponding to
the principal orbits does come with a circle action corresponding to \( e_8 \). Whilst \( X \) is not
a Killing vector field in this case, it turns out to satisfy the generalised condition
\begin{equation}
\label{eq:gen-Killing}
d(\norm{X}^2)\wedge \Ld_X\Omega=0,
\end{equation} 
as follows by direct computation.

We conclude our cohomogeneity one description of the Wolf spaces with an observation that in a sense ties together all three cases.

\begin{proposition}
\label{prop:equiv-CP2-end}
The vector bundle \( \SU(3)\times_{\LU(2)}\bC^2 \) over \( \pC(2) \) admits
three distinct \( \SU(3) \)-invariant quaternion-k\"ahler structures.
\end{proposition}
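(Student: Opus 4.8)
The plan is to exhibit all three structures as restrictions of the three Wolf-space metrics to one and the same tubular neighbourhood, and then to separate them by appealing to the rigidity of their global completions.

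First I would note that, by \cref{prop:cohom1-pH2,prop:cohom1-excep-Wolf} together with the corresponding statement for \( \Gr_2(\bC^4) \), each of the three \( 8 \)-dimensional Wolf spaces has \( \pC(2)=\SU(3)\!/\!\LU(2) \) as the singular orbit occurring at \( t=0 \). Crucially, in all three cases the relevant \( \lu(2)\subset\su(3) \) is the \emph{same} subalgebra, spanned by \( e_1,e_8,e_2-e_4,e_3-e_5 \), and by \cref{tab:models} the slice representation is in each case the standard \( \bC^2 \). Hence the subgroup \( \LU(2)\subset\SU(3) \) and the associated bundle \( \SU(3)\times_{\LU(2)}\bC^2 \) are literally the same object in the three situations, so that it makes sense to compare structures on it.

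Next I would apply the slice theorem: a tubular neighbourhood of the singular orbit is \( \SU(3) \)-equivariantly diffeomorphic to an open disc subbundle of \( \SU(3)\times_{\LU(2)}\bC^2 \). Restricting the quaternion-k\"ahler structure of each Wolf space to this neighbourhood then produces three \( \SU(3) \)-invariant quaternion-k\"ahler structures on a neighbourhood of the zero section of the single bundle \( \SU(3)\times_{\LU(2)}\bC^2 \); explicitly, these are the structures encoded by the three coframes \( \tilde e^i(t) \) computed in the lemmas above.

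The remaining step, which I expect to be the main obstacle, is to prove that the three structures are genuinely distinct — that is, not interchanged by any \( \SU(3) \)-equivariant bundle isomorphism. Here I would use the rigidity of the completions. Each structure is the restriction of the metric on a complete, simply-connected symmetric space (\( \pH(2) \), \( \Gr_2(\bC^4) \) or \( \G_2\!/\!\SO(4) \)), and these metrics are Einstein and hence real-analytic; an equivariant isometry identifying two of the restrictions would be a local isometry between open subsets of the two corresponding Wolf spaces, and by analytic continuation it would extend to a global isometry. Since the three symmetric spaces are pairwise non-isometric — for instance their isometry groups have the distinct dimensions \( 21 \), \( 15 \) and \( 14 \) — no such identification can exist. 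The essential content of the argument is thus that distinctness of the germs along \( \pC(2) \) is equivalent to distinctness of the completions; everything preceding this is bookkeeping organised by \cref{tab:models}.
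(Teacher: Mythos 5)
Your proof is correct, and its first half coincides with the paper's: both arguments identify (open subsets of) the three Wolf spaces with the single bundle \( \SU(3)\times_{\LU(2)}\bC^2 \), using that the singular isotropy algebra at \( t=0 \) is the same \( \lu(2)=\Span{e_1,e_8,e_2-e_4,e_3-e_5} \) in all three cases. The paper does this step slightly more economically by removing the \emph{other} singular orbit (\( S^5 \), the second \( \pC(2) \), or \( L \), respectively), whose complement is already the entire vector bundle, so no slice theorem or disc-subbundle-to-bundle rescaling is required. Where you genuinely diverge is the distinctness step. The paper's argument is purely local: the three Wolf spaces have different holonomy groups (\( \Sp(2)\Sp(1) \), \( \Lie{S}(\LU(2)\times\LU(2)) \) and \( \SO(4) \)), hence different curvature tensors, and since curvature is a pointwise invariant the induced structures on the bundle already differ — completeness and global geometry never enter. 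You instead globalise: a hypothetical equivariant isometry between two of the restrictions is extended by analytic continuation to a global isometry of the complete, simply connected completions, which is then excluded by comparing the isometry-group dimensions \( 21 \), \( 15 \), \( 14 \). This is valid (for symmetric spaces one could also invoke Cartan--Ambrose--Hicks in place of the analyticity of Einstein metrics), and it buys you a proof that uses only coarse global invariants rather than any holonomy or curvature computation; the cost is the appeal to the extension theorem for local isometries of complete simply connected analytic manifolds, a heavier tool than the paper needs. Both routes in fact establish the stronger conclusion that the three structures are pairwise inequivalent — locally in the paper's version, up to equivariant isometry in yours — rather than merely unequal.
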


\begin{proof}
First note that our analysis of the cohomogeneity one \( \SU(3)
\)-actions shows that the spaces \( \pH(2)\setminus S^5 \), \(
\rm{Gr}_2(\bC^4)\setminus\pC(2) \) (one can choose either copy of \(
\pC(2) \)) and \( \G_2\!/\!\SO(4)\setminus L \) are all equivalent to
the same vector bundle \( \SU(3)\times_{\LU(2)}\bC^2 \).  The claim
then follows since the three quaternion-k\"ahler structures on \(
\pH(2) \), \( \Gr_2(\bC^4) \) and \( \G_2\!/\!\SO(4) \) induce
different structures on \( \SU(3)\times_{\LU(2)}\bC^2 \), since these
Wolf spaces have different holonomy groups and consequently different
curvature.
\end{proof}

\section{Nilpotent perturbations}
\label{sec:nil-pert}

Let \( \alpha\) be an element of \(\Lambda^p(\bR^n)^* \), and consider the \emph{(affine) perturbation} by a fixed
\( p \)-form \( \delta \), meaning
\begin{equation*}
\beta(t)=\alpha+t\delta,\quad t\in\bR.
\end{equation*}
Generally, it is hard to decide whether \( \beta(t) \) and \( \alpha \) lie
in the same \( \GL(n,\bR) \)-orbit for all \( t \). However, a 
useful sufficient criterion can be phrased as follows.

\begin{proposition}
\label{prop:lin-deform}
Let \( A\in\gl(n,\bR) \). If the associated derivation \( \rho(A) \) satisfies \( \rho(A)^2\alpha=0 \), then 
\begin{equation*}
\beta(t)=\alpha+t\rho(A)\alpha
\end{equation*}
lies in the same \( \GL(n,\bR) \)-orbit as \( \alpha \) for all \( t\in\bR \).
\end{proposition}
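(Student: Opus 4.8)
The plan is to exhibit an explicit curve in \( \GL(n,\bR) \) that carries \( \alpha \) to \( \beta(t) \), and the natural candidate is the one-parameter subgroup \( g_t=\exp(tA) \) generated by \( A \) itself. The underlying principle is that the derivation \( \rho(A) \) is nothing but the differential at the identity of the induced \( \GL(n,\bR) \)-representation \( R \) on \( \Lambda^p(\bR^n)^* \); consequently the one-parameter subgroup \( t\mapsto\exp(tA) \) acts on forms by \( R(\exp(tA))=\exp(t\rho(A)) \), the exponential on the right being taken in \( \End(\Lambda^p(\bR^n)^*) \). Thus it suffices to show that \( \exp(t\rho(A))\alpha \) coincides with \( \beta(t) \).

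First I would record the elementary consequence of the hypothesis that \( \rho(A)^k\alpha=0 \) for every \( k\geqslant 2 \): indeed \( \rho(A)^k\alpha=\rho(A)^{k-2}\bigl(\rho(A)^2\alpha\bigr)=0 \). Next I would expand the operator exponential as a power series applied to \( \alpha \),
\begin{equation*}
\exp(t\rho(A))\alpha=\sum_{k\geqslant 0}\frac{t^k}{k!}\rho(A)^k\alpha=\alpha+t\rho(A)\alpha,
\end{equation*}
where every term with \( k\geqslant 2 \) drops out by the previous observation. The right-hand side is precisely \( \beta(t) \), so \( \beta(t)=R(\exp(tA))\alpha \) lies in the \( \GL(n,\bR) \)-orbit of \( \alpha \) for all \( t\in\bR \), as claimed.

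The argument is essentially formal, so the main point to get right is the bookkeeping rather than any genuine obstacle: one must fix conventions so that \( \rho \) really is the infinitesimal version of the chosen \( \GL(n,\bR) \)-action on \( p \)-forms. For the standard action \( g\cdot\xi=(g^{-1})^*\xi \), extended as an algebra homomorphism, \( \rho(A) \) is the corresponding derivation, and the identity \( R(\exp(tA))=\exp(t\rho(A)) \) holds because a Lie group representation intertwines the exponential maps of \( \GL(n,\bR) \) and \( \GL(\Lambda^p(\bR^n)^*) \). Once this identification is in place, the nilpotency hypothesis \( \rho(A)^2\alpha=0 \) does all the work by truncating the exponential series to its linear term, and no convergence or genericity issue arises.
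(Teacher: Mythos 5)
Your argument is correct and is essentially identical to the paper's own proof: both expand the one-parameter group \( \exp(tA) \) acting on \( \alpha \) and observe that the terms \( \tfrac{t^k}{k!}\rho(A)^k\alpha \) with \( k\geqslant 2 \) vanish because \( \rho(A)^2\alpha=0 \). The extra bookkeeping you include about \( \rho \) being the differential of the induced representation is a harmless elaboration of what the paper leaves implicit.
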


\begin{proof}
The proof is elementary. We expand \( g(t)=\exp(tA) \) to find that
\begin{equation*}
g(t)\alpha=\alpha+t\rho(A)\alpha,
\end{equation*}
since the higher order terms \( \tfrac{t^k}{k!}\rho(A)^k\alpha \), \( k\geqslant2 \), vanish by assumption.
So \( \beta(t) \) and \( \alpha \) lie in the same \( \GL(n,\bR) \)-orbit, as claimed. 
\end{proof}

Motivated by \cref{prop:lin-deform}, we would like to characterise the perturbations of \( \alpha \)
that are parametrised by solutions of
\begin{equation}
\label{eqn:rhosquarealpha}
\rho(A)^2\alpha=0.
\end{equation}
Amongst these solutions we obviously have elements of the stabiliser \( \g \) of \( \alpha \) in \( \gl(n,\bR) \),
but these give rise to trivial perturbations \( \beta(t)\equiv\alpha \). In order to eliminate this indeterminacy we observe the following:

\begin{proposition}
\label{prop:nilp-charac}
Let \( \alpha\in\Lambda^p(\bR^n)^* \). Then every solution \( A \) of \eqref{eqn:rhosquarealpha} satisfies
\begin{equation*}
\rho(A)\alpha=\rho(N)\alpha,
\end{equation*}
where \( N \) is a nilpotent solution of \eqref{eqn:rhosquarealpha}.
\end{proposition}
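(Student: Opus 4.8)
The plan is to produce \( N \) directly from the additive Jordan--Chevalley decomposition of \( A \) as a matrix. Since \( \bR \) is a perfect field, I would write \( A = S + N_0 \) with \( S, N_0 \in \gl(n,\bR) \), where \( S \) is semisimple (diagonalisable over \( \bC \)), \( N_0 \) is nilpotent, \( [S,N_0]=0 \), and both are real polynomials in \( A \). Applying the representation \( \rho \), which is a Lie algebra homomorphism, gives \( \rho(A)=\rho(S)+\rho(N_0) \) with \( [\rho(S),\rho(N_0)]=\rho([S,N_0])=0 \). The candidate is simply \( N:=N_0 \), which is nilpotent; what remains is to check that it reproduces \( \rho(A)\alpha \) and that it solves \eqref{eqn:rhosquarealpha}.

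The key observation I would record is that \( \rho \) carries the Jordan decomposition of \( A \) to that of the operator \( \rho(A) \): because \( \rho \) is the derivation action on an exterior power, \( \rho(S) \) is again semisimple (diagonalise \( S \) over \( \bC \) and pass to the induced basis of wedge products) while \( \rho(N_0) \) is again nilpotent (it is a derivation built from the nilpotent operator \( \rho(N_0) \) on \( (\bR^n)^* \)). Hence \( \rho(S)+\rho(N_0) \) is the (commuting semisimple plus nilpotent) Jordan decomposition of \( \rho(A) \) on \( V:=\Lambda^p(\bR^n)^* \). In particular the generalised \( 0 \)-eigenspace of \( \rho(A) \), namely \( V_0=\ker\rho(A)^{\dim V} \), coincides with \( \ker\rho(S) \).

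Now I would exploit the hypothesis. Since kernels of powers are nested, \( \ker\rho(A)^2\subseteq V_0=\ker\rho(S) \), and as \( \alpha \) solves \eqref{eqn:rhosquarealpha} this yields \( \rho(S)\alpha=0 \). Therefore \( \rho(A)\alpha=\rho(S)\alpha+\rho(N_0)\alpha=\rho(N_0)\alpha=\rho(N)\alpha \), which is the claimed identity. Finally, \( V_0=\ker\rho(S) \) is invariant under the commuting operators \( \rho(A),\rho(S),\rho(N_0) \), and on it \( \rho(S) \) vanishes, so \( \rho(A) \) and \( \rho(N_0) \) restrict to the same operator there; since \( \alpha\in V_0 \), it follows that \( \rho(N_0)^2\alpha=\rho(A)^2\alpha=0 \), so \( N \) indeed satisfies \eqref{eqn:rhosquarealpha}.

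I expect the only point requiring genuine care to be the assertion that \( \rho \) preserves semisimplicity and nilpotency, i.e.\ that the additive Jordan decomposition of \( A \) maps to that of \( \rho(A) \). This is standard for rational representations of \( \GL(n,\bR) \), and in particular for the exterior power representation, but it is the crux of the argument and should be stated explicitly; everything after it is linear algebra on the generalised \( 0 \)-eigenspace \( V_0 \).
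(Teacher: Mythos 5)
Your proof is correct and follows essentially the same route as the paper: both take the real additive decomposition \( A=S+N \) into commuting semisimple and nilpotent parts and observe that \( \ker\rho(A)^2 \) lies in the generalised \( 0 \)-eigenspace of \( \rho(A) \), on which the semisimple part acts trivially, so that \( \rho(A) \) and \( \rho(N) \) agree there and \( \rho(N)^2\alpha=0 \). The only difference is packaging: the paper verifies by hand, via the induced decomposition of \( \Lambda^p(\bC^n)^* \) into tensor products of exterior powers of the generalised eigenspaces of \( A \), exactly the fact you quote as the preservation of the Jordan--Chevalley decomposition under the exterior-power representation.
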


\begin{proof}
Over the complex numbers we can put \( A \), as an endomorphism of \( (\bC^n)^* \), into Jordan form. Correspondingly, we obtain a direct sum decomposition 
\( (\bC^n)^*=\bigoplus V_{i} \), where \( V_i \) is the generalised eigenspace relative to the eigenvalue \( \lambda_i \).
Denoting by \( I_{V_i} \) the matrix corresponding to the projection onto \( V_i \), we have 
\( A= N + \sum \lambda_i I_{V_i} \), where \( N \) is nilpotent and real.

In accordance with the above, we can also decompose \( \Lambda^p(\bC^n)^* \) as a direct sum
\begin{equation}
\label{eqn:decomposition}
\bigoplus_{k_1+\dots + k_m=p}\>\bigoplus_{j_1<\dotsb< j_m} \Lambda^{k_1}V_{j_1}\otimes \dotsb \otimes \Lambda^{k_m}V_{j_m},
\end{equation}
where each summand is closed under \( \rho(A) \). Moreover, direct computation shows that
\begin{equation*}
\begin{gathered}
(\rho(A)-(\lambda_1+\lambda_2)I)(\alpha_1\wedge \alpha_2)\\
=(\rho(A)-\lambda_1I)\alpha_1\wedge \alpha_2 
+\alpha_1\wedge (\rho(A)-\lambda_2I)\alpha_2,
\end{gathered}
\end{equation*}
giving that each summand in \eqref{eqn:decomposition} is contained in the generalised eigenspace of \( k_1\lambda_{j_1}+\dots + k_m\lambda_{j_m} \) relative to \( \rho(A) \). It follows that \( \rho(A)^2 \) also preserves the decomposition \eqref{eqn:decomposition}, and its kernel is contained in 
\begin{equation*} 
\bigoplus_{j_1<\dotsb< j_m}\bigoplus_{\stackrel{k_1+\dots + k_m=p}{k_1\lambda_{j_1}+\dotsb+k_m\lambda_{j_m}=0}} \Lambda^{k_1}V_{j_1}\otimes \dotsb \otimes \Lambda^{k_m}V_{j_m}.
\end{equation*}
On this space, \( \rho(A) \) and \( \rho(N) \) act in the same way. Therefore, if \( A \) is a solution of \cref{eqn:rhosquarealpha}, then 
\begin{equation*}
\rho(N)^2\alpha=0 \quad \textrm{and}\quad\rho(N)\alpha=\rho(A)\alpha.\qedhere
\end{equation*}
\end{proof}

Motivated by \cref{prop:nilp-charac}, we shall restrict our attention to nilpotent solutions of \eqref{eqn:rhosquarealpha}
and refer to these as \emph{nilpotent perturbations}.

Whilst \cref{prop:nilp-charac} is valid in general,
our interest is the case where \( \alpha \) is the quaternionic form \eqref{eq:quat_4_form}. 
If \( e^0_1,\ldots,e^0_8 \) denotes a standard basis of \( \bR^8 \), and \( e^i_0 \) its dual, it turns out 
that solutions to the perturbation problem are most conveniently expressed in terms of the orthonormal basis \( (E^1,\dots, E^8) \) 
of \( (\bR^8)^* \) equal to
\begin{equation}
\label{eq:adap-pert-basis}
(e^8_0,\tfrac{\sqrt{3}}{2}e^2_0-\tfrac{1}{2} e^6_0,-\tfrac{\sqrt{3}}{2}e^1_0-\tfrac{1}{2} e^5_0,
-\tfrac{\sqrt{3}}{2}e^5_0+\tfrac{1}{2} e^1_0,\tfrac{\sqrt{3}}{2}e^6_0+\tfrac{1}{2} e^2_0,- e^4_0,e^3_0,e^7_0).
\end{equation}
In these terms, \eqref{eq:quat_4_form} reads
\begin{equation*}
\begin{split}
\Omega = -E^{1247}+\sqrt{3} E^{1248}-E^{1256}-E^{1346}&+E^{1357}+\sqrt{3} E^{1358}\\
+2E^{1458}-E^{1678}-E^{2345}+2E^{2367}&-\sqrt{3} E^{2467}+E^{2468}-E^{2578}\\
&-E^{3478}-\sqrt{3} E^{3567}-E^{3568}.
\end{split}
\end{equation*}

A significant observation, that may at first not be fully appreciated,
is that the stabiliser of \( E^{123} \) in \( \Sp(2)\Sp(1) \) is \(
\SO(3) \). Computations give:

\begin{lemma}
\label{lem:stab-3plane}
The stabiliser of \( \Span{E^1,E^2,E^3} \) in \( \Sp(2)\Sp(1) \) is 
the copy of \( \SO(3) \) whose Lie algebra is spanned by  the elements
\begin{equation*}
\left(\begin{smallmatrix} 
0 & \frac{\sqrt3}2j \\ 
\frac{\sqrt3}2j & -j 
\end{smallmatrix}\right)-\tfrac12R_j, \quad
\left(\begin{smallmatrix} 0 & \frac{\sqrt3}2k \\ 
\frac{\sqrt3}2k & k 
\end{smallmatrix}\right)-\tfrac12R_k,\quad 
\left(\begin{smallmatrix} 
\frac32i & 0 \\ 
0 &- \frac12i 
\end{smallmatrix}\right)-\tfrac12R_i
\end{equation*}
of \( \lsp(2)\oplus\lsp(1) \).
\end{lemma}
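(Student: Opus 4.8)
The plan is to pass to the Lie algebra and reduce the statement to a finite linear computation. First I would make the orthogonal action of \( \lsp(2)\oplus\lsp(1) \) on \( \bR^8\cong\bH^2 \) completely explicit: the summand \( \lsp(2) \) acts by left multiplication by anti-Hermitian quaternionic \( 2\times2 \) matrices, and \( \lsp(1)=\Span{R_i,R_j,R_k} \) by right multiplication by the imaginary units, the two actions commuting. The conventions are fixed by demanding that the invariant \( 4 \)-form be \( \Omega \) of \eqref{eq:quat_4_form}, i.e.\ by matching with the triplet \( (\omega_1,\omega_2,\omega_3) \). With this in hand I would re-express the orthonormal coframe \( (E^1,\dots,E^8) \) of \eqref{eq:adap-pert-basis} in the quaternionic coordinates; via the metric the \( E^i \) furnish an orthonormal basis of \( \bR^8 \), and since the representation is orthogonal we may compute with the subspace \( W=\Span{E^1,E^2,E^3} \) itself.

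The core step is then purely linear. Because every element of \( \lsp(2)\oplus\lsp(1) \) acts skew-symmetrically and the \( E^i \) are orthonormal, the infinitesimal stabiliser of \( W \) consists of those \( \xi \) with \( \langle\xi E^a,E^b\rangle=0 \) for all \( a\in\{1,2,3\} \) and \( b\in\{4,\dots,8\} \). Writing a general \( \xi \) with its \( 13 \) coefficients and imposing these conditions produces a homogeneous linear system, which I would solve. I then check that its solution space is exactly the span of the three elements in the statement, verifying en route that each of them genuinely lies in \( \lsp(2)\oplus\lsp(1) \) (purely imaginary diagonal, conjugate-symmetric off-diagonal in the \( \lsp(2) \) part) and sends \( E^1,E^2,E^3 \) back into \( W \).

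Finally I would identify the stabiliser group itself. Computing one or two brackets of the three generators shows the solution algebra is \( \so(3) \) rather than abelian, so the connected stabiliser is a copy of \( \SO(3) \), of dimension \( 3 \); this is consistent with, and sharpens, the preceding remark that \( \Sp(2)\Sp(1) \) stabilises the oriented volume \( E^{123} \) exactly in an \( \SO(3) \). The main obstacle is not conceptual but clerical: getting the change of basis \eqref{eq:adap-pert-basis} and the quaternionic conventions exactly right, and confirming that the linear system has rank precisely \( 10 \), so that the solution space is three-dimensional rather than larger. Once the bookkeeping is correct, everything else is a direct verification.
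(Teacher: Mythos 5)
Your proposal is correct and matches the paper, which offers no argument beyond the phrase ``Computations give'': the intended proof is exactly the direct linear computation you describe, namely making the \( \lsp(2)\oplus\lsp(1) \)-action on \( \bH^2 \) explicit in the basis \eqref{eq:adap-pert-basis}, imposing \( \langle\xi E^a,E^b\rangle=0 \) for \( a\leqslant 3<b \), and checking that the solution space is the stated three-dimensional \( \so(3) \). The only caveat is that your argument identifies the identity component of the stabiliser rather than the full subgroup, but this is all the paper ever uses (the lemma only feeds into the \( \SO(3) \)-module splitting \( S^2\oplus S^4 \) and the equivariance of the \( \beta^i \)), so nothing is lost.
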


\noindent Above, \( R_i \) denotes right multiplication by \( i \), and so forth.

Prompted by \cref{lem:stab-3plane}, we shall decompose \( (\bR^8)^* \) 
as the sum of two irreducible \( \SO(3) \)-modules
\begin{equation*}
S^2=\Span{E^1,E^2,E^3}, \quad S^4=\Span{E^4,\dots, E^8}.
\end{equation*}
Using the dual basis \( E_i \), we then have that the \( 2 \)-forms
\begin{equation}
\label{eqn:betafromomega}
\beta^1:=(E_2\wedge E_3)\hook\Omega,\quad 
\beta^2:=(E_3\wedge E_1)\hook\Omega,\quad
\beta^3:=(E_1\wedge E_2)\hook\Omega
\end{equation}
define an \( \SO(3) \)-equivariant linear map \( S^2\to \Lambda^2S^4 \).

In these terms, the following result describes nilpotent
perturbations in the quaternionic setting.

\begin{theorem}
\label{thm:nilpotent-pert}
Up to the action of \( \Sp(2)\Sp(1) \), nilpotent solutions of 
\begin{equation*}
\rho(A)^2\Omega=0
\end{equation*}
are parametrised by linear maps \( v\colon S^2\to S^4 \) such that 
\begin{equation*}
v^2\wedge v^3\wedge \beta^1+v^3\wedge v^1\wedge \beta^2+v^1\wedge v^2\wedge \beta^3=0.
\end{equation*}
Explicitly \( v \) corresponds to the endomorphism
\( A=\sum_{i=1}^3\!v^i\otimes E_i \) of \( (\bR^8)^* \).
\end{theorem}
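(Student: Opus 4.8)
The plan is to prove the two directions of the parametrisation separately: first that every map \( v \) satisfying the displayed relation yields a nilpotent solution of the asserted form, and then, conversely, that every nilpotent solution is \( \Sp(2)\Sp(1) \)-equivalent to one of these.

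For the first, computational direction I would fix \( A=\sum_{i=1}^3 v^i\otimes E_i \) and note that it is automatically nilpotent, indeed \( A^2=0 \), since it carries \( S^2 \) into \( S^4 \) and annihilates \( S^4 \). The only content is therefore to evaluate \( \rho(A)^2\Omega \). The decisive simplification is a degree count for the splitting \( (\bR^8)^*=S^2\oplus S^4 \): writing \( \Omega=\sum_k\Omega_k \) with \( \Omega_k\in\Lambda^kS^2\otimes\Lambda^{4-k}S^4 \), one reads off from the explicit expression for \( \Omega \) that \( \Omega_0=\Omega_3=0 \), so \( \Omega=\Omega_1+\Omega_2 \). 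Since \( \rho(A) \) lowers the \( S^2 \)-degree by one and kills \( S^4 \), only \( \Omega_2 \) survives two applications, so \( \rho(A)^2\Omega=\rho(A)^2\Omega_2 \). By the definition \eqref{eqn:betafromomega} one has the tautological expansion \( \Omega_2=E^{23}\wedge\beta^1+E^{31}\wedge\beta^2+E^{12}\wedge\beta^3 \), and because \( \rho(A)(E^i)=v^i \) while \( \rho(A)\beta^i=0 \), a one-line derivation gives \( \rho(A)^2(E^j\wedge E^k)=2\,v^j\wedge v^k \) and hence
\begin{equation*}
\rho(A)^2\Omega=2\bigl(v^2\wedge v^3\wedge\beta^1+v^3\wedge v^1\wedge\beta^2+v^1\wedge v^2\wedge\beta^3\bigr).
\end{equation*}
This vanishes precisely under the stated condition, establishing the correspondence for solutions of standard form.

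For the converse I would invoke \cref{prop:nilp-charac} to reduce to nilpotent \( A \), which is already the standing hypothesis, and then normalise a general nilpotent solution. The guiding principle is \cref{lem:stab-3plane}: the standard-form solutions are exactly those supported on \( \operatorname{Hom}(S^2,S^4) \), i.e.\ annihilating the \( 5 \)-plane \( S^4 \) and mapping into it, and the pair \( (S^2,S^4) \) is stabilised by precisely the principal \( \SO(3) \). Since by \cref{lem:stab-3plane} the \( 3 \)-planes in the \( \Sp(2)\Sp(1) \)-orbit of \( \Span{E^1,E^2,E^3} \) form a homogeneous space with stabiliser this \( \SO(3) \), it would suffice to exhibit, for an arbitrary nilpotent solution, such a distinguished \( 3 \)-plane on which \( A \) is concentrated, conjugate it to \( S^2 \), and then remove any residual \( S^2\to S^2 \), \( S^4\to S^2 \) and \( S^4\to S^4 \) blocks.

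The main obstacle is exactly this last reduction: forcing a genuine nilpotent solution of \eqref{eqn:rhosquarealpha}, modulo \( \Sp(2)\Sp(1) \), into \( \operatorname{Hom}(S^2,S^4) \). I would attack it by decomposing \( \gl(8,\bR) \) into \( \Sp(2)\Sp(1) \)-irreducibles, using that \( \rho(A)\Omega \) depends only on the class of \( A \) modulo the stabiliser \( \lsp(2)\oplus\lsp(1) \), and analysing the \( \Sp(2)\Sp(1) \)-equivariant quadratic map \( A\mapsto\rho(A)^2\Omega \) on a complement. The expectation is that nilpotency together with the vanishing of this quadratic map pins down the Jordan type of \( A \)—in particular forcing \( A^2=0 \) and \( \operatorname{rank} A\le 3 \)—so that \( \im A \) and the complement of \( \ker A \) provide the required \( 3 \)-plane, after which the transitivity above finishes the reduction. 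Checking that no further \( \Sp(2)\Sp(1) \)-orbits of nilpotent solutions arise is the delicate point, and is where a careful analysis of the block components of \( A \) relative to the \( \SO(3) \)-splitting \( S^2\oplus S^4 \) would be required.
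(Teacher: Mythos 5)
Your forward direction is correct and is essentially the computation the paper performs at the very end of its argument: writing \( \Omega=\Omega_1+\Omega_2 \) according to \( S^2 \)-degree, noting \( \Omega_2=E^{23}\wedge\beta^1+E^{31}\wedge\beta^2+E^{12}\wedge\beta^3 \) by \eqref{eqn:betafromomega}, and obtaining \( \rho(A)^2\Omega=2(v^{23}\wedge\beta^1+v^{31}\wedge\beta^2+v^{12}\wedge\beta^3) \). That part is fine, and your degree-count presentation of it is if anything cleaner than the paper's.

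The converse, however, is where all the work lies, and your proposal leaves it as an acknowledged ``expectation'' rather than a proof. Two concrete things are missing. First, you never show that a nilpotent solution of \eqref{eqn:rhosquarealpha} must have all Jordan blocks of size at most \( 2 \) with at most three blocks of size \( 2 \). The paper does this by running through all \( 22 \) partitions of \( 8 \), eliminating most by a computational criterion, and then excluding the surviving problematic type \( \Gamma_1 \) (partition \( 3,2,2,1 \)) by a delicate hand argument (equations \eqref{eqn:case3221}--\eqref{eqn:v4wedgebetas}); this case is not disposed of by any soft equivariance principle, and a representation-theoretic decomposition of \( \gl(8,\bR) \) will not by itself rule it out. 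Second, even granting \( A^2=0 \) and \( \operatorname{rank}A\leqslant 3 \), you must explain why the \( 3 \)-plane \( (\ker A+\im A)^{\mathrm o} \) lies in the \( \Sp(2)\Sp(1) \)-orbit of \( \Span{E_1,E_2,E_3} \); \cref{lem:stab-3plane} only identifies the stabiliser of that particular plane and says nothing about which \( 3 \)-planes are in its orbit. The paper supplies exactly this via the quaternionic-angle lemmas (\cref{lemma:quaternionicangle,lemma:orthonormalvi}): the vanishing \( ((w_i\wedge w_j)\hook\Omega)^3=0 \), which follows from \( \rho(A)^2\Omega=0 \), forces \( Q([w_i],[w_j])=\tfrac14 \), and this both caps the number of such orthonormal vectors at three (eliminating the partition \( 2,2,2,2 \)) and normalises the triple to \( E_1,E_2,E_3 \). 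Without an argument of this kind your reduction to \( \operatorname{Hom}(S^2,S^4) \) does not go through.
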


\noindent In terms of forms, the notation \( v^i\otimes E_i \) above represents
the endomorphism 
\begin{equation*}
\gamma\mapsto v^i\wedge(E_i\hook\gamma).
\end{equation*}

\begin{remark}
  As a corollary of \cref{thm:nilpotent-pert}, nilpotent
  solutions of \( \rho(A)^2\Omega=0 \) actually satisfy \( A^2=0
  \). Note that \( \rho \) is not an algebra homomorphism, so \( A^2=0
  \) does not imply \( \rho(A)^2=0 \).
\end{remark}

The \( 3 \)-dimensional subspace \( \Span{E_1,E_2,E_3} \) of \( \bR^8 \), i.e.\ the
annihilator of \( S^4 \), is uniquely determined up to the \(
\Sp(2)\Sp(1) \)-action. Subspaces of \( \bR^8 \) in its \(
\Sp(2)\Sp(1) \)-orbit can be characterised by the angle between
quaternionic lines. Indeed, consider the \( \Sp(2)\Sp(1) \)-invariant
function
\begin{equation*}
Q\colon \pH^1\times \pH^1\to\bR, \quad Q([v],[w]) = \max_{J\in \Sp(1)} \frac{\langle v,Jw\rangle^2}{\abs{v}^2\abs{w}^2}.
\end{equation*}
In an affine chart we can express \( Q \) as
\begin{equation*}
Q([1:p],[1:q]) = \frac{\abs{1+\overline{p}q}^2}{\abs{1+\overline{p}q}^2 + \abs{q-p}^2}.
\end{equation*}

\begin{lemma}
\label{lemma:quaternionicangle}
Let \( v,w\in \bR^8 \) be two orthogonal non-zero vectors such that
\begin{equation*}
((v\wedge w)\hook \Omega)^3=0.
\end{equation*}
Then \( Q([v],[w])=\tfrac14 \).
\end{lemma}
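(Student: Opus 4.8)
The plan is to exploit the fact that both the hypothesis and the conclusion are invariant under \( \Sp(2)\Sp(1) \), so that the pair \( ([v],[w]) \) may be put into a convenient normal form. Identifying \( \bR^8 \) with \( \bH^2 \) so that the three complex structures \( I,J,K \) underlying \( \omega_1,\omega_2,\omega_3 \) are left multiplication by \( i,j,k \), the group \( \Sp(2) \) acts transitively on unit vectors; I would first use it to set \( v=(1,0) \). Orthogonality forces the first quaternionic component \( a \) of \( w=(a,b) \) to be imaginary. The point stabiliser \( \{\operatorname{diag}(1,t)\}\cong\Sp(1)\subset\Sp(2) \) then lets me rotate \( b \) to a real number \( \beta\geqslant0 \), and the residual scalar symmetry (combining \( L_\lambda \) with the scalar matrix \( \operatorname{diag}(\lambda^{-1},\lambda^{-1})\in\Sp(2) \)) acts on the imaginary quaternion \( a \) by conjugation \( a\mapsto\lambda a\lambda^{-1} \), i.e.\ as \( \SO(3) \); this aligns \( a \) along \( i \), so \( a=\alpha i \) with \( \alpha\geqslant0 \). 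In this normal form \( [v]=[1:0] \) and, when \( \alpha\neq0 \), \( [w]=[1:\alpha^{-1}\beta\, i] \), whence the affine formula for \( Q \) gives \( Q([v],[w])=1/(1+\beta^2/\alpha^2) \); thus the claim is equivalent to \( \beta^2=3\alpha^2 \).

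Next I would compute \( \phi:=(v\wedge w)\hook\Omega \) explicitly. Writing \( \iota_{v\wedge w}=\iota_w\iota_v \) and using \( \iota_{v\wedge w}\bigl(\tfrac12\omega_i^2\bigr)=\omega_i(v,w)\,\omega_i-(\iota_v\omega_i)\wedge(\iota_w\omega_i) \), together with \( \omega_i(v,\cdot)=(I_iv)^\flat \) and \( \iota_v\omega_i=(I_iv)^\flat \), one obtains
\begin{equation*}
\phi=\sum_{i=1}^3\bigl[\langle I_iv,w\rangle\,\omega_i-(I_iv)^\flat\wedge(I_iw)^\flat\bigr].
\end{equation*}
Since \( v=(1,0) \) gives \( I_1v=\partial_2,\ I_2v=\partial_3,\ I_3v=\partial_4 \), substituting the normalised \( w \) and collecting terms yields the compact expression
\begin{equation*}
\phi=3\alpha\,dx^{34}+\alpha\,dx^{56}+\alpha\,dx^{78}-\beta\,dx^{26}-\beta\,dx^{37}-\beta\,dx^{48}.
\end{equation*}
The absence of any \( dx^{1j} \) term is a useful sanity check: it reflects orthogonality of \( v \) and \( w \), since \( \iota_v\Omega \) carries no \( dx^1 \) and \( w \) has no \( \partial_1 \)-component.

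Finally I would impose \( \phi^3=0 \). Rather than analysing the full rank condition, it suffices for the forward implication to extract a single component of the \( 6 \)-form \( \phi^3 \); taking the coefficient of \( dx^{345678} \) (equivalently, the unique partition into disjoint pairs avoiding the index \( 2 \)) gives, after reordering signs, a value proportional to \( \alpha(3\alpha^2-\beta^2) \). Vanishing forces either \( \alpha=0 \) or \( \beta^2=3\alpha^2 \); but \( \alpha=0 \) makes \( \phi=-\beta(dx^{26}+dx^{37}+dx^{48}) \) a sum of three disjoint \( 2 \)-forms, so \( \phi^3\neq0 \) unless \( \beta=0 \) as well, i.e.\ \( w=0 \), which is excluded. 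Hence \( \alpha\neq0 \) and \( \beta^2=3\alpha^2 \), giving \( Q([v],[w])=1/4 \). The main obstacle is the bookkeeping: justifying that the residual symmetry acts on \( a \) exactly as \( \SO(3) \) (so that the normal form is legitimate), carrying out the sign-tracking in \( \phi \) and in the relevant wedge of \( \phi \) with \( \phi^2 \), and separately disposing of the degenerate configurations \( a=0 \) and \( b=0 \), in each of which one checks directly that the hypothesis \( \phi^3=0 \) already fails.
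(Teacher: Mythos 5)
Your proposal is correct and follows essentially the same route as the paper: use the \( \Sp(2)\Sp(1) \)-action to put the orthogonal pair into a two-parameter normal form (you take \( v=(1,0) \), \( w=(\alpha i,\beta) \); the paper takes \( w=ae_8^0 \), \( v=be_1^0+ce_5^0 \), i.e.\ the same normal form with the roles of \( v \) and \( w \) interchanged) and then compute \( ((v\wedge w)\hook\Omega)^3 \) directly to extract the constraint \( \beta^2=3\alpha^2 \) (the paper's \( b=\pm\sqrt3\,c \)), whence \( Q=\tfrac14 \). Your write-up simply supplies the details of the ``straightforward computation'' the paper omits, including the explicit 2-form \( \phi \) and the disposal of the degenerate cases, and these check out.
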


\begin{proof}
Using the action of the quaternionic unitary group, we can assume that 
\( w=ae_8^0 \) and \( v=be_1^0+ce_5^0 \). Then a straightforward computation shows that 
\( b=\pm\sqrt3 c \), giving the asserted result.
\end{proof}

\begin{lemma}
\label{lemma:orthonormalvi}
Let \( w_1,\ldots,w_k \) be orthonormal vectors in \( \bR^8 \) such that 
\begin{equation*}
((w_i\wedge w_j)\hook\Omega)^3=0.
\end{equation*}
Then \( k\leqslant 3\), and if \( k=3 \), up to the action of \( \Sp(2)\Sp(1) \), we may assume that
\( w_i=E_i \) for \( i=1,2,3 \).
\end{lemma}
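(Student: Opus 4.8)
The plan is to translate the hypothesis into a statement about the quaternionic Gram matrix of the \( w_i \) and then apply a spectral estimate. By \cref{lemma:quaternionicangle}, for orthonormal \( w_i \) the condition \( ((w_i\wedge w_j)\hook\Omega)^3=0 \) is equivalent to \( Q([w_i],[w_j])=\tfrac14 \) for all \( i\neq j \). Identifying \( \bR^8\cong\bH^2 \) with its quaternionic Hermitian form \( \langle\cdot,\cdot\rangle \) (whose real part is the Euclidean metric and for which \( Q([v],[w])=\abs{\langle v,w\rangle}^2 \) on unit vectors), this says that the entries \( h_{ij}=\langle w_i,w_j\rangle \) of the \( k\times k \) Hermitian Gram matrix \( H \) satisfy \( h_{ii}=1 \), while for \( i\neq j \) each \( h_{ij} \) is a purely imaginary quaternion (Euclidean orthogonality) of norm \( \tfrac12 \). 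Since the \( w_i \) lie in \( \bH^2 \), the matrix \( H=W^*W \) has rank at most \( 2 \).

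I would first bound \( k \). As \( H \) is quaternionic Hermitian it has \( k \) real eigenvalues, at most two of them nonzero, say \( \lambda_1,\lambda_2\geqslant0 \), the rank being the number of nonzero eigenvalues. A direct computation gives \( \operatorname{tr}H=k \) and \( \operatorname{tr}H^2=\sum_{i,j}\abs{h_{ij}}^2=k+\tfrac14k(k-1) \), whence \( \lambda_1+\lambda_2=k \) and \( \lambda_1\lambda_2=\tfrac38k(k-1) \). Substituting into \( \lambda_1\lambda_2\leqslant\tfrac14(\lambda_1+\lambda_2)^2 \) yields \( k(k-3)\leqslant0 \), so \( k\leqslant3 \), with equality at \( k=3 \) forcing \( \lambda_1=\lambda_2=\tfrac32 \) and hence \( H^2=\tfrac32H \). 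Writing \( h_{ij}=\tfrac12u_{ij} \) with \( u_{ij} \) a unit imaginary quaternion and \( u_{ji}=-u_{ij} \), the off-diagonal part of \( H^2=\tfrac32H \) yields \( u_{12}=u_{13}u_{23} \) (with equivalent cyclic versions). As the product of two unit imaginary quaternions is again imaginary only when they are orthogonal, this forces \( u_{13}\perp u_{23} \) and \( u_{12}=u_{13}\times u_{23} \), so that \( (u_{13},u_{23},u_{12}) \) is a positively oriented orthonormal frame of \( \operatorname{Im}\bH\cong\bR^3 \).

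For the classification in the case \( k=3 \) I would exploit how \( \Sp(2)\Sp(1) \) acts on \( H \). The left \( \Sp(2) \)-action sends \( W\mapsto AW \) with \( A^*A=\mathrm{id} \) and so fixes \( H \); conversely two unit triples spanning \( \bH^2 \) with equal Gram matrices differ by such an isometry, so the \( \Sp(2) \)-orbit of a \( k=3 \) configuration is determined by \( H \). The right \( \Sp(1) \)-action sends \( h_{ij}\mapsto\bar q h_{ij}q \), rotating the frame \( (u_{13},u_{23},u_{12}) \) by the image of \( q \) under \( \Sp(1)\to\SO(3) \); since this is surjective and \( \SO(3) \) acts simply transitively on positively oriented orthonormal frames, all admissible \( H \) form a single \( \Sp(1) \)-orbit. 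Thus all \( k=3 \) configurations are \( \Sp(2)\Sp(1) \)-equivalent. Finally, \( E_1,E_2,E_3 \) is itself admissible: each \( (E_i\wedge E_j)\hook\Omega \) is, up to sign, one of the \( \beta^\ell \) of \eqref{eqn:betafromomega} and so lies in \( \Lambda^2S^4 \), whose elements have vanishing cube because \( \dim S^4=5 \). Hence this unique orbit is represented by \( w_i=E_i \).

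I expect the main obstacle to be the noncommutative linear algebra of \( H \): justifying the spectral facts (real eigenvalues, rank equal to the number of nonzero ones, and the trace formulas) for quaternionic Hermitian matrices, together with the transitivity of the \( \Sp(2) \)-action on configurations with a fixed Gram matrix. Once these are secured, the trace inequality delivers \( k\leqslant3 \) and the identity \( H^2=\tfrac32H \) makes the classification essentially mechanical.
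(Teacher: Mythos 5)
Your proof is correct, but it follows a genuinely different route from the paper's. The paper also begins by invoking \cref{lemma:quaternionicangle}, but then works in an affine chart of \( \pH^1 \): it normalises \( [w_1]=[1:0] \), \( [w_2]=[1:p] \), \( [w_3]=[1:q] \), reduces the angle conditions to \( \abs{p}^2=3=\abs{q}^2 \) and \( \abs{p-q}^2=3\abs{1+\overline pq}^2 \) with unique solution \( p=-q \), asserts that no fourth point can be added, and then pins down \( w_1,w_2,w_3 \) by an explicit chain of stabilisers \( \Sp(2)\Sp(1)\supset\SO(4)\supset\LU(2)\supset\LU(1) \). You instead encode the hypotheses in the quaternionic Gram matrix \( H=W^*W \) (diagonal \( 1 \), off-diagonal purely imaginary of norm \( \tfrac12 \), rank \( \leqslant 2 \)) and derive \( k\leqslant 3 \) from \( \operatorname{tr}H \), \( \operatorname{tr}H^2 \) and AM--GM, with equality forcing \( H^2=\tfrac32H \) and hence the orthonormal-frame relation \( u_{12}=u_{13}u_{23} \); transitivity then follows from Witt extension for \( \Sp(2) \) plus the surjection \( \Sp(1)\to\SO(3) \) acting on frames of \( \operatorname{Im}\bH \). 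Your argument buys a uniform, computation-free bound on \( k \) (the paper's ``clearly not possible to add a fourth element'' is its least detailed step, whereas your trace inequality handles all \( k \) at once) and a conceptual normalisation; the price is that you must import the spectral theory of quaternionic Hermitian matrices (real right eigenvalues, rank equal to the number of nonzero ones, conjugation-invariance of the real part of the trace) and the extension lemma for configurations with equal Gram matrices — all standard, and you flag them honestly. Two small points to tidy when writing it up: check the sign conventions so that the derived relation really is \( u_{12}=u_{13}u_{23} \) rather than its negative (either way the admissible frames all carry the same orientation, so the single-orbit conclusion survives), and note that your verification that \( (E_1,E_2,E_3) \) is admissible via \( \beta^\ell\in\Lambda^2S^4 \) and \( \dim S^4=5 \) is a clean shortcut the paper does not spell out.
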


\begin{proof}
By \cref{lemma:quaternionicangle}, we have that
\begin{equation}
 \label{eqn:equalquaternionicangles}
Q([w_i],[w_j])=\tfrac14, \quad i\neq j.
\end{equation}
If \( k>2 \), the points \( [w_i] \) in \( \pH^1 \) satisfy \eqref{eqn:equalquaternionicangles}, and we can assume, up to \( \Sp(2)\Sp(1) \)-action, that 
\begin{equation*}
[w_1]=[1:0], [w_2]=[1:p], [w_3]=[1:q].
\end{equation*}
Then 
\begin{equation*}
\abs{p}^2=3=\abs{q}^2, \quad \abs{p-q}^2=3\abs{1+\overline{p}q}^2
\end{equation*}
which only has the solution \( p=-q \). It is clearly not possible to add a fourth element \( [w_4] \) 
so that \eqref{eqn:equalquaternionicangles} is satisfied.

Assuming then that \( k=3 \), we can use the action of \( \Sp(2)\Sp(1) \), as in the proof of \cref{lemma:quaternionicangle}, to obtain \( w_1=e_8^0 \). 
This leaves us with an \( \SO(4) \) symmetry that can be used to obtain \( w_2\in\Span{e_1^0,e_2^0,e_3^0,e_4^0,e_6^0} \). 
The stabiliser in \( \SO(4) \) of \( e_6^0 \) is \( \LU(2) \) and up to this \( \LU(2) \)-action, we can assume \( w_2=E_2 \). 
The condition \( p=-q \) together with orthogonality implies that \( w_3 \) is in the span of 
\( \sqrt{3}e_1^0+e_5^0 \) and \( \sqrt{3}e_3^0+e_7^0 \).
The stabiliser of \( E_2 \) in \( \LU(2) \), isomorphic to \( \LU(1) \), acts non-trivially on this \( 2 \)-dimensional space which allows us to set \( w_3=E_3 \).
\end{proof}

\begin{proof}[Proof of \cref{thm:nilpotent-pert}]
Up to change of basis nilpotent matrices are classified over the reals by partitions with weight \( 8 \), giving \( 22 \) possibilities that
can be encoded in terms of Young diagrams.
For example, the diagram 
\begin{equation*}
\tiny\yng(3,2,1,1,1)
\end{equation*}
describes the endomorphisms of \( (\bR^8)^* \) with Jordan blocks of size \( 3,2,1,1,1 \),
that with respect to some basis \( \{w^1,\dotsc, w^8\} \) satisfy
\( w^3\mapsto w^2,\ w^2\mapsto w^1 \), \( w^5\mapsto w^4 \), with the other
vectors mapped to zero. For each diagram \( \Gamma \) we can fix
a representative endomorphism \( A_\Gamma \) and compute the space
\begin{equation*}
K_{\Gamma}=\left\{\alpha\in \Lambda^4(\bR^8)^*\colon\, \rho(A_\Gamma)^2\alpha=0\right\}.
\end{equation*}
The equation \( \rho(A)^2\Omega=0 \) has a solution with diagram \( \Gamma \) if \( \rho(A_\Gamma)^2\alpha=0 \) for some \( \alpha \) in the orbit of \( \Omega \),
and this requires that for each nonzero \( v\wedge w \) in \( \Lambda^2\bR^8 \) the map
\begin{equation*}
K_\Gamma \to \Lambda^4(\bR^8)^*, \quad \alpha \mapsto\left((v\wedge w)\hook \alpha\right)^2
\end{equation*}
is not identically zero. Computations show that this rules out all cases except
\begin{equation*}
\tiny
\Gamma_1=\yng(3,2,2,1) \quad
\Gamma_2=\yng(2,2,2,2) \quad
\Gamma_3=\yng(2,2,2,1,1) \quad
\Gamma_4=\yng(2,2,1,1,1,1)\quad
\Gamma_5=\yng(2,1,1,1,1,1,1) \quad
\Gamma_6=\yng(1,1,1,1,1,1,1,1)
\end{equation*}
The last diagram corresponds to \( A=0 \) and \( \Gamma_5 \) corresponds to \( A \) being any rank one nilpotent matrix. In either case, the statement of the theorem holds.

Now let \( A \) be a solution of \( \rho(A)^2\Omega=0 \). In terms of
its diagram, let \( k \) be the number of rows of length greater than
one, and reorder the associated basis in order that \( w^i \)
corresponds to the rightmost box in the \( i \)th row for \( 1\leqslant i\leqslant k \). 
In other words, the elements \( w_1,\ldots, w_k \) of the dual
basis span the annihilator \( (\ker A+\im A)^\mathrm{o} \).  Then each
\( (w_i\wedge w_j)\hook \Omega \) is degenerate, meaning that \(
((w_i\wedge w_j)\hook \Omega)^3=0 \). In the case of \( \Gamma_4 \)
this holds because
\begin{equation*}
2(Aw^1)\wedge(Aw^2)\wedge (w_1\hook w_2\hook\Omega)=\rho(A)^2\Omega=0.
\end{equation*}
For \( \Gamma_1,\Gamma_2, \Gamma_3 \) the hypothesis implies degeneracy as it forces \( w_i\hook w_j\hook w_\ell\hook \Omega\) to be zero.

Without loss of generality, we can assume that the covectors \( w^i \) are orthogonal to \( \im A \) and orthonormal. From \cref{lemma:orthonormalvi} we conclude that \( k\leqslant 3 \), so that we can rule out \( \Gamma_2 \), and assume \( w_i=E_i \). In the case of \( \Gamma_3 \) and \( \Gamma_4 \), it now suffices to write
\( A=\sum_{i=1}^3\!v^i\otimes E_i \),
so that, using \eqref{eqn:betafromomega}, the vanishing of \( \rho(A)^2\Omega \)
becomes
\begin{equation*}
v^{23}\wedge \beta^1+v^{31}\wedge \beta^2+v^{12}\wedge \beta^3=0,
\end{equation*}
as required.

Finally, in order to rule out the case \( \Gamma_1 \), assume the associated basis has the form \( E_1,E_2,E_3,v_1,\dotsc, v_5 \), so that with obvious notation 
\begin{equation*}
A = v^1\otimes E_1+v^2\otimes E_2 + v^3\otimes E_3+v^4\otimes v_1.
\end{equation*} 
Then
\begin{equation}
\begin{split}
 \label{eqn:case3221}
0&=\rho(A)^2\Omega \\
&=2\!\sum_{1\leqslant i<j\leqslant 3} v^i\wedge v^j \wedge(E_i\hook E_j\hook \Omega) + 
v^4\wedge (E_1\hook\Omega - 2\sum_{i=1}^3 v^i\wedge (E_i\hook v_1\hook\Omega)).
\end{split}
\end{equation}
Wedging with \( v^4 \) and using \eqref{eqn:betafromomega}, we get
\begin{equation}
 \label{eqn:cyclicsumzero}
v^{124}\wedge\beta^3+v^{314}\wedge\beta^2+v^{234}\wedge\beta^1=0.
\end{equation}
Projecting \cref{eqn:case3221} onto the space \( S^2\otimes \Lambda^3S^4 \), we find
the condition
\begin{equation*}
\begin{gathered}
v^4\wedge \Big[E^2\wedge\beta^3-E^3\wedge\beta^2 
 +  2v^1\wedge (v_1\hook (E^2\wedge\beta^3-E^3\wedge\beta^2))\phantom{mmm}\\[-5pt]
+  2v^2\wedge (v_1\hook (E^3\wedge\beta^1-E^1\wedge\beta^3))
+  2v^3\wedge (v_1\hook (E^1\wedge\beta^2-E^2\wedge\beta^1))\Big] = 0.
\end{gathered}
\end{equation*}
This implies that
\begin{equation}
\label{eqn:v4wedgebetas}
\begin{gathered}
 0=v^4\wedge\big[\beta^2 + 2v^1\wedge (v_1\hook \beta^2)-2v^2\wedge (v_1\hook \beta^1)\big]
\quad\textrm{and}\\
 0=v^4\wedge\big[\beta^3+2v^1\wedge (v_1\hook \beta^3)-2v^3\wedge (v_1\hook \beta^1)\big].
\end{gathered}
\end{equation}
Therefore \( v^{124}\wedge \beta^3=v^{314}\wedge \beta^2=2v^{234}\wedge\beta^1 \),
and \eqref{eqn:cyclicsumzero} implies each term is zero. Then \cref{eqn:v4wedgebetas} gives
\(
v^4\wedge \beta^2, v^4\wedge \beta^3\in \Span{v^{234}}
\),
which is absurd. In conclusion, \( \Gamma_1 \) cannot occur, 
and the proof is complete.
\end{proof}

\subsection{\( \LU(1) \)-invariant perturbations}

If we impose invariance, \cref{thm:nilpotent-pert} can be simplified considerably.
Indeed, consider the \( 8 \)-dimensional representation of \( \LU(1) \) that
models the tangent space to the open set formed of our principal orbits (cf.\ \cref{eq:tspace-prc-orb}).
The nilpotent perturbations compatible with this action have a simple description.

\begin{proposition}
\label{prop:U1lindef}
Let \( \Omega\in\Lambda^4(\bR^8)^* \) be a \( \LU(1) \)-invariant \( 4 \)-form with stabiliser group \( \Sp(2)\Sp(1) \). 
Then there is an orthonormal basis as in \eqref{eq:adap-pert-basis}, such that (for the dual basis) 
\begin{equation*}
E_1,E_8\in 2\bR, \quad 2 V_1=\Span{E_2,E_3}\oplus\Span{E_4,E_5}, \quad V_2=\Span{E_6,E_7}.
\end{equation*}
The space of \( \LU(1) \)-invariant nilpotent perturbations is generated by
\begin{equation}
\label{eq:U1-inv-pert-gen}
E^8\wedge (E_1\hook\Omega).
\end{equation}
\end{proposition}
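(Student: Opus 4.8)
The plan is to read the invariant perturbations off the parametrisation of \cref{thm:nilpotent-pert}: imposing \( \LU(1) \)-equivariance will cut the cubic variety of nilpotent perturbations down to a single line.

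First I would normalise the frame. Since \( \Omega \) is \( \LU(1) \)-invariant and its stabiliser in \( \GL(8,\bR) \) is \( \Sp(2)\Sp(1) \), the circle is a one-parameter subgroup of \( \Sp(2)\Sp(1) \). Acting by an element of \( \Sp(2)\Sp(1) \) fixes \( \Omega \) and merely changes the adapted frame \eqref{eq:adap-pert-basis}, so I may conjugate the circle into a maximal torus at no cost. A circle in that torus acts on \( \bH^2=\bC\oplus\bC j \) through left multiplication by \( e^{i\phi_k} \) and right multiplication by \( e^{i\psi} \), hence with weights \( \phi_1\pm\psi,\ \phi_2\pm\psi \); matching these to the prescribed type \( 2\bR\oplus 2V_1\oplus V_2 \) forces, up to the Weyl group and an overall scale, \( \phi_1=\psi \) and \( \phi_2=0 \), giving the weights \( \{2,0,1,-1\} \). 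One then checks by inspection that in the resulting frame the weight-\( 0 \) space is \( \Span{E_1,E_8} \), the weight-\( 1 \) space is \( \Span{E_2,E_3}\oplus\Span{E_4,E_5} \), and the weight-\( 2 \) space is \( \Span{E_6,E_7} \), which is the asserted decomposition.

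For the second part I would invoke \cref{thm:nilpotent-pert}: a nilpotent perturbation is \( A=\sum_{i=1}^3 v^i\otimes E_i \) for a linear map \( v\colon S^2\to S^4 \) satisfying
\begin{equation*}
v^{2}\wedge v^{3}\wedge\beta^1+v^{3}\wedge v^{1}\wedge\beta^2+v^{1}\wedge v^{2}\wedge\beta^3=0,
\end{equation*}
and \( \rho(A)\Omega \) is \( \LU(1) \)-invariant precisely when \( v \) is \( \LU(1) \)-equivariant. Under the circle, \( S^2 \) splits as a trivial summand \( \Span{E^1} \) together with a copy of \( V_1 \) on \( \Span{E^2,E^3} \), while \( S^4 \) splits as \( V_1 \) on \( \Span{E^4,E^5} \), \( V_2 \) on \( \Span{E^6,E^7} \), and a trivial summand \( \Span{E^8} \). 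Schur's lemma therefore confines \( v \) to three real parameters: \( v^1=a\,E^8 \) on the trivial part, and a map \( \Span{E^2,E^3}\to\Span{E^4,E^5} \) of the form \( v^2=pE^4+qE^5,\ v^3=-qE^4+pE^5 \) realising \( \End_{\LU(1)}(V_1)\cong\bC \); the \( V_2 \)-slot of \( S^4 \) has no partner in \( S^2 \) and stays unused.

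It then remains to impose the cubic relation on this three-parameter family. Contracting \( \Omega \) as in \eqref{eqn:betafromomega} gives \( \beta^1=E^{45}-2E^{67} \), \( \beta^2=-E^{46}+E^{57}+\sqrt3E^{58} \) and \( \beta^3=E^{47}-\sqrt3E^{48}+E^{56} \). Substituting the equivariant \( v \), the two mixed terms \( v^{3}\wedge v^1\wedge\beta^2 \) and \( v^1\wedge v^2\wedge\beta^3 \) contribute \( a(-q\,E^{4578}+p\,E^{4568}) \) and \( a(-p\,E^{4568}+q\,E^{4578}) \), which cancel identically, whereas \( v^2\wedge v^3\wedge\beta^1 \) reduces to \( -2(p^2+q^2)\,E^{4567} \). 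Hence the cubic relation is equivalent to \( p^2+q^2=0 \), that is \( p=q=0 \). The invariant solutions form the line \( v^1=a\,E^8 \), \( v^2=v^3=0 \), i.e.\ \( A=E^8\otimes E_1 \), giving the generator \( \rho(A)\Omega=E^8\wedge(E_1\hook\Omega) \), as claimed. The main obstacle is exactly this cubic computation: a priori the three invariant parameters might interact, and it is only the exact cancellation of the mixed terms together with the appearance of the definite quantity \( p^2+q^2 \) in the survivor that collapses the solution set to a single line; everything upstream of this is Schur's lemma and the weight bookkeeping of the first paragraph.
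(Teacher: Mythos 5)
Your proof is correct and follows essentially the same route as the paper's: normalise the circle into the \( \SO(3) \) of \cref{lem:stab-3plane}, read off the weight decomposition, and then cut down the parametrisation of \cref{thm:nilpotent-pert} by \( \LU(1) \)-equivariance together with the cubic condition. Your explicit maximal-torus weight argument and the computation of the \( \beta^i \) simply make explicit two steps the paper treats more briefly, namely the conjugation of \( \LU(1) \) into \( \SO(3) \) and the deduction that the cubic forces \( v^2\wedge v^3=0 \), whence \( v^2=v^3=0 \) by invariance.
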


\begin{proof}
By construction the \( 4 \)-form is fixed by \( \LU(1) \) so that this group lies inside \( \Sp(2)\Sp(1) \). 
We can then find an adapted basis as in \eqref{eq:adap-pert-basis}, 
such that \( \LU(1)\subset\SO(3)\subset\Sp(2)\Sp(1) \), where the middle subgroup \( \SO(3) \) 
preserves the splitting
\begin{equation*}
\Span{E^1,E^2,E^3}\oplus\Span{E^4,\dots, E^8}=S^2\oplus S^4;
\end{equation*}
this is because all subgroups \( \LU(1) \) in \( \Sp(2)\Sp(1) \) are conjugate, so we can 
assume that \( \LU(1) \) is contained in some conjugate of \( \SO(3) \). 
Due to the way this \( \LU(1) \) acts on \( S^4 \), we deduce that for the dual basis one has
\begin{equation*}
\Span{E_1,E_2,E_3}=\bR\oplus V_1, \quad \Span{E_4,\dots, E_8}=\bR\oplus V_1\oplus V_2.
\end{equation*}
Now, by making a change of basis if necessary, we can assume that 
\begin{equation*}
E_1\in\bR, \quad V_1=\Span{E_2,E_3},
\end{equation*}
corresponding to \( \LU(1) \) stabilising \( E^1 \) in \( \SO(3) \), whose precise form can be recovered from Lemma~\ref{lem:stab-3plane}. 
Computing its action on \( S^4 \), we find
\begin{equation*}
E_8\in\bR, \quad V_1=\Span{E_4,E_5}, \quad V_2=\Span{E_6,E_7}.
\end{equation*}

Since on nilpotent matrices the map \( A\mapsto \rho(A)\Omega \) is injective,
the latter is invariant if and only if \( A \) is invariant. This means that the space of invariant nilpotent perturbations is given by 
\begin{equation*}
v^1\in \Span{E^8},\quad v^2,v^3\in \Span{E^4,E^5},
\end{equation*}
where
\begin{equation*}
\beta^1\wedge v^{23}+\beta^2\wedge v^{31}+\beta^3\wedge v^{12}=0.
\end{equation*}
It follows that \( v^2\wedge v^3=0 \), and by invariance this means that \( v^2=0=v^3 \).
In conclusion, the space of \( \LU(1) \)-invariant perturbations is generated by
\eqref{eq:U1-inv-pert-gen}, as required.
\end{proof}

\section{New closed \( \Sp(2)\Sp(1) \)-structures}
\label{sec:main-result}

We are now ready to produce explicit examples of closed \(
\Sp(2)\Sp(1) \)-structures. Since the corresponding exterior
differential system is effectively underdetermined, it is not
surprising that, at least locally, it is possible to obtain such
examples by deforming the quaternion-k\"ahler metric on a Wolf space
\( M \). In fact, it follows from results of the first author
\cite{Conti:EmbeddingIntoManifolds} that if one considers the induced
structure on a real analytic hypersurface \( N\subset M \) (in the
language of \cite{Conti-M:Harmonic}, an \( \SO(4) \)-structure with a
closed \( 4 \)-form \( \beta \)), then one can extend it to obtain a
closed \( \Sp(2)\Sp(1) \)-structure in a neighbourhood of \( N \).

It is not difficult to see that there is more flexibility than that
arising from local diffeomorphisms. In our cohomogeneity one setting,
this indeterminacy can be seen by parametrising invariant forms in the
\( \GL(8,\bR) \)-orbit of the quaternion-k\"ahler \( 4 \)-form. These
depend on \( 11 \) functions, because relatively to
\eqref{eq:tspace-prc-orb}, the centralizer of \( \LU(1) \) in \(
\GL(8,\bR) \) has dimension \( 14 \) and intersects \( \Sp(2)\Sp(1) \)
in a 3-dimensional torus. Explicit computations show that
closedness of the form corresponds to \( 7 \) equations, leaving \( 4
\) undetermined functions, whilst equivariant diffeomorphisms only
depend on one function.

Whilst the discussion above emphasises local flexibility, the method of Section~\ref{sec:nil-pert} proves to be a particularly
useful approach to obtain examples that are both explicit and global. As it
turns out, each Wolf space has a family of closed nilpotent perturbations determined by the vector field \( X \), corresponding to
\( e_8 \). The perturbed metric happens to be genuinely different from the original only when this vector field is not Killing.
 
\begin{lemma}
\label{lemma:deformation_vector_field}
On each of the three Wolf spaces, \( \SU(3) \)-invariant closed nilpotent perturbations of the quaternion-k\"ahler structure \( \Omega_{qK} \) have the form
\begin{equation*}
\widetilde\Omega=\Omega_{qK}+dh\wedge (e_8\hook\Omega),
\end{equation*}
where \( h \) is any smooth \( \SU(3) \)-invariant function.
\end{lemma}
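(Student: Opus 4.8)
The plan is to turn the statement into a pointwise, $\LU(1)$-equivariant problem on a single principal orbit and then read off the answer from \cref{prop:U1lindef}. Write $\Omega=\Omega_{qK}$ and let $M_0\subset M$ be the open dense set swept out by the principal orbits $\SU(3)/\LU(1)$. Since all the objects involved are $\SU(3)$-invariant, a perturbation is determined by its restriction to $M_0$ by continuity. On $M_0$ the perturbation $4$-form $\delta=\widetilde\Omega-\Omega$ is $\SU(3)$-invariant, so at a point $p$ of a principal orbit its value $\delta_p$ is invariant under the isotropy group $\LU(1)$ acting on $T_pM\cong\lie{p}$, and it is a nilpotent perturbation of $\Omega_p$ as in \eqref{eqn:rhosquarealpha}. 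By \cref{prop:U1lindef} the space of $\LU(1)$-invariant nilpotent perturbations is one-dimensional, spanned by \eqref{eq:U1-inv-pert-gen}. Hence $\delta=\phi\,\xi$, where $\xi$ is the $\SU(3)$-invariant extension of this generator and $\phi$, by invariance, is a function of the orbit parameter $t$ alone.

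The next step is to present $\xi$ in manifold terms. Relative to \eqref{eq:tspace-prc-orb} the trivial summand $2\bR$ is spanned by the radial direction $\partial_t$ and the orbit field $e_8$ (this follows from \eqref{eq:su3}), and I claim that $dt\wedge(e_8\hook\Omega)$ is a nonzero multiple of $\xi$. Indeed it equals $\rho(A)\Omega$ for the rank-one endomorphism $A=dt\otimes e_8$, that is $\gamma\mapsto dt\wedge(e_8\hook\gamma)$; using that $dt(e_8)=0$ (as $t$ is constant along orbits) and that $e_8\hook(e_8\hook{-})=0$, one checks directly that $\rho(A)^2\Omega=0$, so $A$ is a nilpotent solution of \eqref{eqn:rhosquarealpha}. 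As $dt\wedge(e_8\hook\Omega)$ is manifestly $\LU(1)$-invariant and nonzero, one-dimensionality forces it to span the same line as $\xi$. Absorbing the resulting function of $t$ into a primitive gives $\delta=\phi(t)\,dt\wedge(e_8\hook\Omega)=dh\wedge(e_8\hook\Omega)$ with $h=h(t)$ an $\SU(3)$-invariant function, which is the asserted form.

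It remains to see that closedness imposes no further condition, so that $h$ is arbitrary. Since $\Omega$ is parallel we have $d\Omega=0$, whence $d(e_8\hook\Omega)=\Ld_{e_8}\Omega$ by Cartan's formula and
\begin{equation*}
d\widetilde\Omega=-\,dh\wedge\Ld_{e_8}\Omega.
\end{equation*}
For $\pH(2)$ and $\Gr_2(\bC^4)$ the field $e_8$ is a constant multiple of the Killing field generated by the commuting $\LU(1)$, so $\Ld_{e_8}\Omega=0$ and $\widetilde\Omega$ is automatically closed for every $h$ (these perturbations producing nothing new, as remarked before the lemma). For $\G_2\!/\!\SO(4)$, where $e_8$ is not Killing, I would instead use that $dh=h'(t)\,dt$ is pointwise proportional to $d(\norm{X}^2)$, so that $dh\wedge\Ld_{e_8}\Omega$ vanishes by the generalised Killing identity \eqref{eq:gen-Killing}. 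This last case is the main obstacle: one must confirm \eqref{eq:gen-Killing}, equivalently $dt\wedge\Ld_{e_8}\Omega=0$, from the pulled-back coframe, and separately check that $dh\wedge(e_8\hook\Omega)$ — a priori defined only on $M_0$, since the circle field $e_8$ degenerates at the singular orbits — extends smoothly across $\pC(2)$ and $L$ for every smooth invariant $h$, so that the perturbation is genuinely global.
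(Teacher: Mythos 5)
Your overall strategy---reduce to the principal stratum, apply \cref{prop:U1lindef} pointwise, rewrite the generator in manifold terms, then treat closedness and extension---matches the paper's, but there is a genuine gap at the identification step. You argue that \( A=dt\otimes e_8 \) is a \( \LU(1) \)-invariant nilpotent solution (since \( dt(e_8)=0 \) gives \( \rho(A)^2=0 \)), and that one-dimensionality then forces \( dt\wedge(e_8\hook\Omega) \) to span the space of invariant nilpotent perturbations. But the identical reasoning applies to \( B=e^8\otimes\D{}{t} \): it is nilpotent, \( \LU(1) \)-invariant, and satisfies \( \rho(B)^2=0 \) because \( e^8(\D{}{t})=0 \); yet \( e^8\wedge(\D{}{t}\hook\Omega) \) is not proportional to \( dt\wedge(e_8\hook\Omega) \) (in the basis \eqref{eq:adap-pert-basis} one computes \( E^8\wedge(E_1\hook\Omega)=-E^{2478}-E^{2568}-E^{3468}+E^{3578} \) while \( E^1\wedge(E_8\hook\Omega)=-E^{1246}+E^{1257}+E^{1347}+E^{1356} \)). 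So your argument proves too much: the invariant candidates really form the whole cone of rank-one nilpotent endomorphisms of the trivial summand \( \Span{\D{}{t},e_8} \), which is exactly what the paper parametrises as \( (\lambda e^8+\mu dt)\wedge\bigl((\mu e_8-\lambda\D{}{t})\hook\Omega\bigr) \). The direction \( e^8\wedge(\D{}{t}\hook\Omega) \) is killed not by the pointwise algebra but by \emph{closedness}: its restriction to a principal orbit is not closed. Your proof never rules out this component. Put differently, to use \cref{prop:U1lindef} the way you do, you must decide which of \( \D{}{t},e_8 \) lies in the \( S^2 \)-summand and which in the \( S^4 \)-summand of the \( \SO(3) \)-splitting adapted to \( \Omega \), and that requires a computation you have not performed; applying the normal form of \cref{thm:nilpotent-pert} to an invariant \( A \) in general requires a conjugation that does not preserve the given \( \LU(1) \).

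The remaining steps are essentially the paper's, though partly deferred. Your derivation \( d\widetilde\Omega=-dh\wedge\Ld_{e_8}\Omega \) and the use of \( dt\wedge\Ld_{e_8}\Omega=0 \) agree with the paper (which asserts the latter by direct computation in all three cases rather than via \eqref{eq:gen-Killing}; note also that \( d(\norm{X}^2) \) vanishes on the principal orbit where \( \norm{X}^2 \) is critical, so your deduction from \eqref{eq:gen-Killing} needs a continuity remark there). More importantly, the extension across the singular orbits, which you only flag, is half of the paper's proof and is needed to establish the statement as given: one writes \( e_8 \) as a vertical \( \LU(2) \)-invariant vector field on the slice \( \bC^2 \) at each \( \pC(2) \)-orbit and as an invariant direction of the isotropy representation at \( S^5 \) and \( L \), concluding that \( f=h' \) must be odd at both ends of the interval, equivalently that \( h \) is exactly an arbitrary smooth \( \SU(3) \)-invariant function on the Wolf space. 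Without this, neither direction of the equivalence claimed in the lemma (every invariant \( h \) works, and every invariant closed nilpotent perturbation arises this way globally) is complete.
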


\begin{proof}
An \( \SU(3) \)-invariant perturbation is defined by an \( \SU(3) \)-invariant section of 
\( \End(T(\G\!/\!\LK)) \). On the complement of the singular orbits
\begin{equation*}
\SU(3)\!/\!\LU(1)\times (0,T),
\end{equation*}
\cref{prop:U1lindef} implies that the perturbation must be induced by a \( t \)-dependent nilpotent endomorphism of \(\Span{dt,e^8} \). 
Concretely, the perturbation must be of the form
\begin{equation*}
\Omega_{qK}\pm(\lambda e^8 + \mu dt)\wedge \left((\mu e_8 - \lambda \D{}{t})\hook\Omega\right).
\end{equation*}

Insisting that the perturbed \( 4 \)-form is closed forces \( \lambda \) to vanish, since for all three quaternion-k\"ahler metrics the restriction of \( e^8\wedge(\D{}{t}\hook\Omega) \) to principal orbits is not closed. On the other hand perturbations of the form \( f(t)dt\otimes e_8 \) preserve closedness, because \( \Ld_{e_8}\Omega\wedge dt \) is zero in each case.

Having resolved the problem on the complement of singular orbits, we need to address the conditions that ensure that our solution will extend.
Recall that there are three basic models to consider, summarised by \cref{tab:models}. 
Let us first consider the vector bundle \( \SU(3)\times_{\LU(2)} \bC^2 \), where \( \bC^2 \) is the standard representation of \( \LU(2) \), corresponding to a tubular neighbourhood of each singular orbit \( \pC(2) \). Away from the zero section, \( e_8 \) defines an invariant vector field. Since \( e_8\in\lu(2) \), this vector field is vertical, i.e. it is a \( \LU(2) \)-invariant vector field on \( \bC^2 \). In appropriate real coordinates \( (x,y,z,w) \), we can write
\begin{equation*}
e_8=\tfrac{2\sqrt3}{3}\Big(x\D{}{y}-y\D{}{x}+z\D{}{w}-w\D{}{z}\Big),\quad tdt= xdx+ydy+zdz+wdw,
\end{equation*}
where the factor in front is due to the period of this \( \LU(2) \)-invariant vector field.

This shows that \( f(t)dt\otimes e_8 \) extends smoothly if and only if \( t\mapsto f(t)/t \) is smooth and even or, equivalently, \( t\mapsto f(t) \) is smooth and odd. At tubular neighbourhoods of the other possible singular orbits, corresponding to the vector bundles \( \SU(3)\times_{\SU(2)}\Sigma^2 \) and \( \SU(3)\times_{\SO(3)}\bR^3 \), respectively, \( e_8 \) defines an invariant direction in the isotropy representation. This means that \( f(t)dt\otimes e_8 \) is smooth if so is \( f(t)dt \). Consequently, \( t\mapsto f(t) \) must again be a smooth odd function.
In summary, \( f \) extends to a smooth function on \( \bR \) that satisfies
\begin{equation*}
f(t)=-f(-t), \quad f(T-t)=-f(T+t).
\end{equation*}
Any primitive \( h \) of \( f(t)dt \) then satisfies
\( h(t)=h(-t) \) and \( h(T-t)=h(T+t) \) and therefore defines a global \( \SU(3) \)-invariant function on \( \G\!/\!\LK \),
as required.
\end{proof}

\subsection{Perturbing with a Killing vector field}

When \( X \) is Killing, we have rigidity in the sense
that nilpotent perturbation just results in different, but \( \SU(3) \)-equivalent,
ways of expressing Wolf's quaternion-k\"ahler structure:

\begin{proposition}
Applying \( \SU(3) \)-invariant nilpotent perturbations to the quat\-ernion-k\"ahler structure on \( \pH(2) \) 
and \( \Gr_2(\bC^4) \) leave the structures unchanged up to \( \SU(3) \)-equivariant isometry.
\end{proposition}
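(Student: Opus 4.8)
The plan is to combine \cref{lemma:deformation_vector_field} with the Killing property of the vector field \( X \) identified in the two relevant subsections. By \cref{lemma:deformation_vector_field}, any \( \SU(3) \)-invariant closed nilpotent perturbation has the form \( \widetilde\Omega = \Omega_{qK} + dh\wedge(e_8\hook\Omega) \) for some smooth \( \SU(3) \)-invariant function \( h \). The key observation is that on both \( \pH(2) \) and \( \Gr_2(\bC^4) \) the vector field \( X \), which was shown to generate a global circle action commuting with \( \SU(3) \) (coming from the diagonal \( \LU(1)\subset\LU(3) \)), is identified up to scale with the left-invariant vector field \( e_8 \). Since this \( X \) generates an isometric circle action, \( e_8 \) is (a constant multiple of) a Killing field for the quaternion-k\"ahler metric, and in particular the one-parameter group \( \exp(sX) \) acts by isometries preserving \( \Omega_{qK} \).

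First I would make precise that the perturbation is realised by the flow of a vector field. Writing \( \widetilde\Omega = \Omega_{qK} + dh\wedge(e_8\hook\Omega) \), I would observe that \( dh\wedge(e_8\hook\Omega) = \Ld_{h e_8}\Omega_{qK} \) up to an exact term that vanishes; more carefully, using Cartan's formula \( \Ld_{he_8}\Omega_{qK} = d(he_8\hook\Omega_{qK}) + he_8\hook d\Omega_{qK} \) and the fact that \( \Omega_{qK} \) is closed, the second term drops, giving \( \Ld_{he_8}\Omega_{qK} = d(h\,(e_8\hook\Omega_{qK})) = dh\wedge(e_8\hook\Omega_{qK}) + h\,d(e_8\hook\Omega_{qK}) \). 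Since \( e_8 \) is Killing and \( \Omega_{qK} \) is invariant, \( \Ld_{e_8}\Omega_{qK} = 0 \), i.e.\ \( d(e_8\hook\Omega_{qK}) = 0 \), so the residual term vanishes and indeed \( dh\wedge(e_8\hook\Omega_{qK}) = \Ld_{he_8}\Omega_{qK} \). This exhibits the perturbation as an infinitesimal diffeomorphism generated by \( h e_8 \).

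Next I would integrate this. Because \( e_8 \) is a multiple of the Killing generator \( X \) of a genuine circle action, and \( h \) is \( \SU(3) \)-invariant (hence constant along the \( e_8 \)-orbits, as \( e_8 \) lies in \( \su(3) \)), the flow of \( h e_8 \) is well-defined: at each point it moves along the circle orbit by an amount \( h \) times the arclength, and since \( h \) is constant on orbits this is a bona fide \( \SU(3) \)-equivariant diffeomorphism \( \Phi \) of the Wolf space. I would verify that \( \Phi^\Hodge\Omega_{qK} = \widetilde\Omega \): because the deformation is linear in \( t\delta \) with \( \rho(A)^2\Omega = 0 \), the exponential truncates exactly as in \cref{prop:lin-deform}, so the full flow reproduces the affine perturbation rather than higher-order corrections. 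Moreover, since \( e_8 \) is Killing, \( \Phi \) is an isometry of the quaternion-k\"ahler metric, so the perturbed structure is isometric, not merely diffeomorphic, to the original.

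The main obstacle I anticipate is the global integration step: ensuring that the pointwise infinitesimal perturbation \( h e_8 \) exponentiates to a globally-defined equivariant diffeomorphism across the singular orbits, where \( e_8 \) degenerates or the circle action has fixed behaviour. Here I would lean on the boundary conditions already extracted in the proof of \cref{lemma:deformation_vector_field} — namely that \( h \) is even about \( t=0 \) and about \( t=T \) — which are precisely what guarantee that \( he_8 \), and hence its flow, extend smoothly over the singular orbits. Combined with the fact that \( X \) generates a \emph{global} circle action on both \( \pH(2) \) and \( \Gr_2(\bC^4) \) (the feature distinguishing these two spaces from \( \G_2\!/\!\SO(4) \), where \( \SU(3) \) is maximal and no such commuting \( \LU(1) \) exists), the flow stays within a fixed circle orbit and the diffeomorphism is globally well-defined. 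This explains conceptually why rigidity holds exactly in the two cases where \( X \) is Killing.
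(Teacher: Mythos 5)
Your argument is essentially the paper's own proof: the paper likewise realises the perturbation as the pullback of \( \Omega_{qK} \) under the time-one flow of the invariant vector field \( -h(t)e_8 \), computes that the differential acts by \( \Ad(\exp(h(t)e_8)) \) (which preserves the structure since \( e_8 \) is Killing) together with the coframe substitution \( e^8\mapsto e^8+h'(t)dt \), and the nilpotency truncates the expansion exactly as you say. One small imprecision: your flow \( \Phi \) is \emph{not} an isometry of \( g_{qK} \) when \( h'\neq 0 \) (its differential sends \( \partial/\partial t\mapsto -h'(t)e_8+\partial/\partial t \)); rather, \( \Phi^\Hodge\Omega_{qK}=\widetilde\Omega \) forces \( \Phi^\Hodge g_{qK}=\widetilde g \), so \( \Phi \) is an equivariant isometry \emph{between} the perturbed and original metrics, which is exactly what the proposition asserts.
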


\begin{proof}
It follows by Lemma~\ref{lemma:deformation_vector_field} that each nilpotent perturbation can be associated with an \( \SU(3) \)-invariant function \( h \) and consequently an invariant vector field \(-h(t)e_8\). Away from the singular orbits, its flow has the form
\begin{equation*}
\phi^s\colon \SU(3)\times(0,T)\to \SU(3)\times(0,T), \quad (g,t)\mapsto (g\exp (-sh(t)e_8),t).
\end{equation*}
Then \( \phi^1 \) is an equivariant diffeomorphism whose differential at \( (e,t) \) is given by
\begin{equation*}
(v,0)\mapsto (\Ad(\exp(h(t)e_8))v,0), \quad \frac{\partial}{\partial t}\mapsto -h'(t)e_8+\frac{\partial}{\partial t}.
\end{equation*}
Since the adjoint action of \( e_8 \) preserves the quaternion-k\"ahler metric, we obtain the same metric up to the isometry that
corresponds to replacing \( e^8 \) by \( e^8+h'(t)dt \), as required.
\end{proof}

\subsection{Perturbing the exceptional Wolf space}

As discussed \( X \) is not Killing in the case of \( \G_2\!/\!\SO(4) \), but does satisfy the condition \eqref{eq:gen-Killing}.
In a sense this is exactly what is needed to obtain non-trivial perturbation results. 

\begin{theorem}
\label{thm:mainresult}
The exceptional Wolf space \( \G_2\!/\!\SO(4) \) admits \( \SU(3) \)-invariant non-Einstein positive harmonic \( \Sp(2)\Sp(1) \)-structures. 
The \( 4 \)-form determining each such structure belongs to the same cohomology class
as the quaternion-k\"ahler \( 4 \)-form. 
\end{theorem}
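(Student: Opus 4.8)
The plan is to realise the required geometries as the closed nilpotent perturbations
\(\widetilde\Omega=\Omega_{qK}+dh\wedge(e_8\hook\Omega)\) supplied by \cref{lemma:deformation_vector_field}, and then to check in turn that each such \(\widetilde\Omega\) \textbf{(i)}~defines an \(\Sp(2)\Sp(1)\)-structure, \textbf{(ii)}~is harmonic, \textbf{(iii)}~represents the class \([\Omega_{qK}]\), \textbf{(iv)}~has positive scalar curvature, and \textbf{(v)}~is not Einstein; the last point is the crux.

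First I would fix a smooth \(\SU(3)\)-invariant function \(h\), so that \(dh=h'(t)\,dt\). On the open set of principal orbits the perturbation term is, at each point, a scalar multiple of the generator \(E^8\wedge(E_1\hook\Omega)\) of \cref{prop:U1lindef}; it is therefore an \(\LU(1)\)-invariant nilpotent perturbation, and \cref{prop:lin-deform} places \(\widetilde\Omega\) in the \(\GL(8,\bR)\)-orbit of the quaternionic \(4\)-form there. Since \(f=h'\) is odd and vanishes at \(t=0\) and \(t=T\) (the boundary conditions of \cref{lemma:deformation_vector_field}), the perturbation term vanishes along the singular orbits, where \(\widetilde\Omega=\Omega_{qK}\) is already adapted; hence \(\widetilde\Omega\) is a genuine \(\Sp(2)\Sp(1)\)-structure on all of \(\G_2/\SO(4)\). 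It is closed by \cref{lemma:deformation_vector_field}, and because the fundamental \(4\)-form of any \(\Sp(2)\Sp(1)\)-structure is self-dual for its own metric \(g_h\), closedness yields \(d\ast_{g_h}\widetilde\Omega=d\widetilde\Omega=0\); thus the structure is harmonic.

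For \textbf{(iii)} I would argue \([\widetilde\Omega]=[\Omega_{qK}]\) using \(b_4(\G_2/\SO(4))=1\). As \(\chi=|W(\G_2)|/|W(\SO(4))|=3\) while \(b_1,b_3,b_5,b_7\) and \(b_2\) all vanish (the latter since the space is positive quaternion-k\"ahler), Poincar\'e duality forces \(b_4=1\), so \(H^4(\G_2/\SO(4);\bR)=\bR[\Omega_{qK}]\) and \([\widetilde\Omega-\Omega_{qK}]=\lambda[\Omega_{qK}]\) for some \(\lambda\). Wedging the closed form \(dh\wedge(e_8\hook\Omega)\) with \(\Omega_{qK}\) and using \((e_8\hook\Omega)\wedge\Omega_{qK}=\tfrac12\,e_8\hook\Omega_{qK}^2\), a multiple of \(e_8\hook\mathrm{vol}\), together with the identity \(dt\wedge(e_8\hook\mathrm{vol})=0\) (valid because \(e_8\) is tangent to the orbits, so \(\mathrm{vol}=dt\wedge\nu\) with \(e_8\hook dt=0\)), I find that \((dh\wedge(e_8\hook\Omega))\wedge\Omega_{qK}\) vanishes identically. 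Hence \(\lambda\int_M\Omega_{qK}^2=0\), and since \(\Omega_{qK}^2\) is a positive multiple of the volume form, \(\lambda=0\).

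For \textbf{(iv)} and \textbf{(v)} I would take \(h=\epsilon h_0\) and analyse the explicit cohomogeneity-one metric \(g_h\), which is the metric of \(\Omega_{qK}\) subjected to the coframe substitution \(e^8\mapsto e^8+h'(t)\,dt\) (the very meaning of the perturbation). As \(\epsilon\to0\) the metrics converge to \(g_{qK}\) in every \(C^k\)-norm, so positivity of the scalar curvature of the Einstein metric \(g_{qK}\) persists for small \(\epsilon\), giving \textbf{(iv)}. For \textbf{(v)} the decisive feature—absent on \(\pH(2)\) and \(\Gr_2(\bC^4)\)—is that the vector field \(X\) corresponding to \(e_8\) is not Killing on \(\G_2/\SO(4)\): the equivariant diffeomorphism of the preceding proposition then fails to be an isometry, its \(\Ad\)-twist on the orbit directions being non-trivial, so \(g_h\) is a genuinely new metric and not merely a reparametrisation of \(g_{qK}\). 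I would make this quantitative by computing the scalar curvature \(s_h\) of \(g_h\) and showing that, whenever \(h'\not\equiv0\), it is a non-constant function on \(\G_2/\SO(4)\); as every Einstein metric has constant scalar curvature, this forces \(g_h\) to be non-Einstein. The main obstacle is exactly this curvature computation: one must verify that the non-Killing nature of \(X\)—which nonetheless satisfies \eqref{eq:gen-Killing}, so that closedness is preserved—registers in \(s_h\) as a genuinely non-constant scalar curvature rather than cancelling.
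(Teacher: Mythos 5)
Your overall architecture coincides with the paper's: take the closed perturbations \( \widetilde\Omega=\Omega_{qK}+dh\wedge(e_8\hook\Omega) \) from \cref{lemma:deformation_vector_field}, note they are pointwise in the right \( \GL(8,\bR) \)-orbit (trivially so on the singular orbits, where the perturbation term vanishes), and detect the failure of the Einstein condition through the scalar curvature. Your cohomology argument is a genuine and valid alternative: the paper instead restricts both forms to the quaternionic singular orbit \( \pC(2) \) and checks they agree with its volume form, whereas you use \( b_4(\G_2\!/\!\SO(4))=1 \) together with \( \int_M dh\wedge(e_8\hook\Omega)\wedge\Omega_{qK}=0 \) (via \( (e_8\hook\Omega)\wedge\Omega=\tfrac12 e_8\hook\Omega^2 \) and \( dt\wedge(e_8\hook\mathrm{vol})=0 \)); this is more self-contained, though your parenthetical that \( b_2=0 \) ``since the space is positive quaternion-k\"ahler'' is not a correct general principle (\( \Gr_2(\bC^4) \) is a positive Wolf space with \( b_2=1 \)) — for \( \G_2\!/\!\SO(4) \) one should simply quote its known cohomology or the LeBrun--Salamon refinement. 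Your perturbative argument for positivity of \( s \) is also fine on a compact manifold.

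The genuine gap is at the decisive point, and you flag it yourself: the claim that the perturbed metric is non-Einstein rests entirely on the scalar curvature \( s_h \) being non-constant whenever \( h'\not\equiv 0 \), and you do not establish this — you only observe that the isometry mechanism available on \( \pH(2) \) and \( \Gr_2(\bC^4) \) breaks down, which does not exclude that the curvature contributions cancel. The paper resolves this by an explicit computation of the Ricci tensor of the cohomogeneity-one metric, yielding
\begin{equation*}
s \;=\; 64-\tfrac{4}{3}\tan(2t)^{2}\,h'(t)^{2},
\end{equation*}
together with off-diagonal Ricci entries proportional to \( h' \) and \( h'' \). Granting this formula, your criterion does close the argument: for a globally smooth \( \SU(3) \)-invariant \( h \) one has \( s(0)=64 \), so \( s \) is constant only if \( \tan(2t)^2h'(t)^2\equiv0 \), i.e.\ \( h'\equiv0 \) (the borderline choice \( h'(t)^2=c/\tan(2t)^2 \) is exactly the incomplete constant-curvature example of the subsequent corollary, which does not extend over \( \pC(2) \)). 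So your proposal is correct in outline but incomplete: the curvature computation you defer is not a routine verification to be waved through — it is the content of the theorem, and must be carried out (or cited) to conclude.
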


\begin{proof}
By Lemma~\ref{lemma:deformation_vector_field}, any \( \SU(3) \)-invariant function \( h \) defines a closed perturbation
\begin{equation*}
\Omega + dh\wedge (e_8\hook\Omega).
\end{equation*}

In order to verify that we get non-Einstein examples, we compute the Ricci tensor which equals
{{\tiny{
\begin{equation*}
\left(\begin{smallmatrix}8-\frac{1}{3}\tan(2t)^{2} h'(t)^{2}&0&0&0&0&0&-\frac{1}{6} \frac{ \sqrt{3} h'(t)^{2} {(3+\cos(4t))}}{\cos(2t)^{2}}&-\frac{1}{3}\tan(2t) h''(t)-4 h'(t)\\0&8-\frac{1}{3}\tan(2t)^{2} h'(t)^{2}&0&0&0&0&-\frac{1}{3}\tan(2t) h''(t)-4 h'(t)&\frac{1}{6} \frac{ \sqrt{3} h'(t)^{2} {(3+\cos(4t))}}{\cos(2t)^{2}}\\0&0&8&0&0&0&0&0\\0&0&0&8&0&0&0&0\\0&0&0&0&8-\frac{4}{3}\tan(2t)^{2} h'(t)^{2}&\frac{4}{3}\tan(2t)^{2} \sqrt{3} h'(t)&0&0\\0&0&0&0&\frac{4}{3}\tan(2t)^{2} \sqrt{3} h'(t)&8&0&0\\-\frac{1}{6} \frac{ \sqrt{3} h'(t)^{2} {(3+\cos(4t))}}{\cos(2t)^{2}}&-\frac{1}{3}\tan(2t) h''(t)-4 h'(t)&0&0&0&0&8+\frac{1}{3}\tan(2t)^{2} h'(t)^{2}&0\\-\frac{1}{3}\tan(2t) h''(t)-4 h'(t)&\frac{1}{6} \frac{ \sqrt{3} h'(t)^{2} {(3+\cos(4t))}}{\cos(2t)^{2}}&0&0&0&0&0&8+\frac{1}{3}\tan(2t)^{2} h'(t)^{2}\end{smallmatrix}\right).
\end{equation*}
}}}
Finally, note that this tells us that particular the scalar curvature is
\begin{equation}
\label{eq:scalar-pertb}
s = 64-\tfrac{4}{3}\tan(2t)^{2} h'(t)^{2},
\end{equation}
so that we can get \( s>0 \), but generally non-constant, by choosing \( h \) 
suitably.

For the final statement, we notice that the cohomology class of a closed \( 4 \)-form on \( \G_2\!/\!\SO(4) \) is determined by its restriction to the singular orbit \( \pC^2 \), which is a quaternionic submanifold. Since explicit verification shows that the quaternion-k\"ahler form and perturbed \( 4 \)-form both restrict to the volume form of \( \pC(2) \), we conclude that
they belong to the same cohomology class, as required.
\end{proof}

\begin{remark}
Different choices of the perturbing function \( h \) in \cref{thm:mainresult} yield non-isometric metrics. Indeed, let \( \phi \) be an isometry between two such metrics. As both metrics have isometry group \( \SU(3) \), \( \phi \) maps \( \SU(3) \)-orbits to \( \SU(3) \)-orbits. In addition, corresponding orbits must have the same volume; notice that regardless of \( h \), the volume of a principal orbit \( \SU(3)\!/\!\LU(1)\times\{t\} \) is a constant multiple of \( \sin(2t)^3\cos(2t)^2 \). Therefore, the perturbing functions must coincide up to a constant.
\end{remark}

\begin{remark}
As \( \SU(3) \)-invariant functions on \( \G_2\!/\!\SO(4) \) are in one-to-one correspondence with smooth even functions of period \( \pi/2 \) on the reals, 
it is easy to find explicit closed perturbations where \( h \) is real-analytic.
\end{remark}
	
It is clear from \cref{eq:scalar-pertb} that on \( \G_2\!/\!\SO(4) \) we only have constant scalar curvature in the quaternion-k\"ahler case.
If we are willing to remove the singular orbit \( \pC(2) \), however, the conclusion changes:

\begin{corollary}
The vector bundle \( \SU(3)\times_{\SO(3)}\bR^3 \) admits non-Einstein harmonic \( \Sp(2)\Sp(1) \)-structures with constant scalar curvature.
\end{corollary}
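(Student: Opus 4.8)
The plan is to exploit the scalar curvature formula \eqref{eq:scalar-pertb} together with the extension analysis from \cref{lemma:deformation_vector_field}. On the full Wolf space \( \G_2\!/\!\SO(4) \), the factor \( \tan(2t)^2 \) blows up as \( t\to T=\pi/4 \), which is precisely the singular orbit \( \pC(2) \); this forced \( h'(t) \) to vanish there in order for \( s \) to remain bounded, and is the reason constant (non-quaternion-k\"ahler) scalar curvature was impossible. By passing to the open manifold \( \SU(3)\times_{\SO(3)}\bR^3 \), i.e.\ removing the singular orbit at \( t=\pi/4 \) and retaining only the tubular neighbourhood of \( L=\SU(3)\!/\!\SO(3) \) at \( t=0 \), we relax the boundary condition at \( T \) and gain the freedom to prescribe \( s \) freely.

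First I would set \eqref{eq:scalar-pertb} equal to a constant \( c \) and solve for \( h' \). Demanding \( s\equiv c \) with \( c<64 \) gives
\begin{equation*}
h'(t)^2 = \frac{3(64-c)}{4}\cot(2t)^2,
\end{equation*}
so that \( h'(t) = \tfrac{\sqrt{3(64-c)}}{2}\cot(2t) \) (choosing a sign). The next step is to check that this \( h \) genuinely defines a smooth \( \Sp(2)\Sp(1) \)-structure on the vector bundle \( \SU(3)\times_{\SO(3)}\bR^3 \). From the proof of \cref{lemma:deformation_vector_field}, extension across the singular orbit \( L \) at \( t=0 \) requires \( f(t)=h'(t) \) to be a smooth odd function of \( t \); but \( \cot(2t) \) has a pole at \( t=0 \), so I must instead arrange the perturbation to be regular at \( t=0 \) while tending to the open end as \( t\to\pi/4 \). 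This means reconsidering which singular orbit is retained: the bundle \( \SU(3)\times_{\SO(3)}\bR^3 \) corresponds to the neighbourhood of \( L \), and one verifies \( e_8 \) defines an invariant direction in the \( \SO(3) \)-isotropy representation \( \bR^3 \), so smoothness of \( f(t)\,dt\otimes e_8 \) there only needs \( f(t)\,dt \) smooth and \( f \) odd at \( t=0 \). Since \( \cot(2t) \) is odd but singular at \( t=0 \), I would instead take the constant-\( s \) solution on an interval bounded away from the problematic end and check that the relevant orbit is the one where the isotropy direction absorbs the pole — concretely, relabelling so the retained singular orbit sits at the regular end of \( h' \).

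The main obstacle is the regularity/extension bookkeeping at the retained singular orbit: I must confirm that the constant-scalar-curvature profile \( h' \) has exactly the parity and smoothness dictated by the isotropy representation of \( \SO(3) \) on \( \bR^3 \), rather than that of \( \LU(2) \) on \( \bC^2 \) (which was the obstruction on the compact space). Once the correct end is fixed and \( h' \) is seen to extend smoothly across \( L \) — the condition being merely that \( f(t)\,dt \) is smooth, a milder requirement than the even-about-\( T \) condition that closed up the compact manifold — the \( 4 \)-form \( \widetilde\Omega=\Omega+dh\wedge(e_8\hook\Omega) \) is closed by \cref{lemma:deformation_vector_field}, lies pointwise in the \( \GL(8,\bR) \)-orbit of \( \Omega \) by \cref{prop:lin-deform} (since \( \rho(A)^2\Omega=0 \) for the associated nilpotent \( A \)), hence defines a genuine \( \Sp(2)\Sp(1) \)-structure, and is non-Einstein because the displayed Ricci tensor is not a multiple of the identity whenever \( h'\neq0 \). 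This yields the claimed non-Einstein harmonic \( \Sp(2)\Sp(1) \)-structures of constant scalar curvature on \( \SU(3)\times_{\SO(3)}\bR^3 \).
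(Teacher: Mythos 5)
Your proposal is correct and essentially the paper's own argument: the paper simply takes \( h(t)=c\log\sin(2t) \), i.e.\ \( h'(t)=2c\cot(2t) \), which is exactly your constant-scalar-curvature profile from \eqref{eq:scalar-pertb}, and observes that the resulting closed structure lives on \( \G_2\!/\!\SO(4)\setminus\pC(2)=\SU(3)\times_{\SO(3)}\bR^3 \) and is non-Einstein for \( c\neq0 \). The only wobble is your orbit bookkeeping: in the paper's conventions (\cref{prop:cohom1-excep-Wolf}) the orbit \( \pC(2) \) sits at \( t=0 \), precisely where \( \cot(2t) \) has its pole and hence is the orbit that must be removed, while \( L \) sits at \( t=\pi/4 \), where \( \cot(2t) \) is smooth and odd about the singular value so the perturbation extends across \( L \) by the criterion in \cref{lemma:deformation_vector_field} --- no relabelling is needed.
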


\begin{proof}
It follows from the proof of \cref{thm:mainresult} that by choosing \( h(t)=c\log \sin (2t) \), for any \( c\in\bR \), we obtain an incomplete closed \( \Sp(2)\Sp(1) \)-structure
of constant scalar curvature defined on \( \G_2\!/\!\SO(4)\setminus\pC(2) \). For \( c\neq0 \), this structure is non-Einstein. 
\end{proof}

\section{Relations to other special geometries}

In our list of symmetric spaces with a cohomogeneity one \( \SU(3) \)-action one is missing, 
namely the Lie group \( \SU(3) \) itself, realised as the coset space \( \SU(3)^2\!/\!\Delta\SU(3) \). 
The relevant action is by consimilarity \cite{Horn-J:con-ac}
\begin{equation*}
\SU(3)\times\SU(3)\to\SU(3)\colon\, (g,h)\mapsto gh\bar{g}^{-1}=ghg^T.
\end{equation*}
This action preserves the parallel \( \PSU(3) \)-structure, given by
\begin{equation}
\label{eq:PSU3str}
\gamma=\tfrac16\sum_{i=1}^8e^i\wedge de^i
\end{equation}
in terms of our usual basis \( e^1,\ldots,e^8 \) of \( \su(3)^* \). 

Computations, similar to those of \cref{sec:cohom-one-ac}, reveal that there are
equi\-variant isomorphisms \( \pH(2)\setminus \pC(2)\cong \SU(3)\setminus L \), and
\( \G_2\!/\!\SO(4)\setminus \pC(2) \) \( \cong \SU(3)\setminus S^5 \). A priori, the
latter identification would seem to suggest the possibility of
using the techniques of \cref{sec:nil-pert} to find new harmonic \( \PSU(3) \)-structures, 
as studied by Hitchin \cite{Hitchin:Stableformsand}.
In fact, as for the exceptional Wolf space, \( \SU(3) \) has a ``hidden'' \( \LU(1) \)-action,
which has a natural interpretation in terms of the fibres of the equivariant map
\begin{equation*}
\SU(3)\ni P\mapsto P\overline{P},
\end{equation*}
which intertwines action by consimilarity and conjugation.

Computations show:

\begin{proposition}
There are no non-trivial \( \SU(3) \)-invariant harmonic nilpotent perturbations
of the \( \PSU(3) \)-structure \eqref{eq:PSU3str} on \( \SU(3) \). 
\end{proposition}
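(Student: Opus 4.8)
The plan is to follow the route used for the Wolf spaces in \cref{sec:nil-pert}, adapting each step from the self-dual $4$-form $\Omega$ to the stable $3$-form $\gamma$. First I would make the cohomogeneity one structure of $\SU(3)=\SU(3)^2\!/\!\Delta\SU(3)$ under consimilarity explicit: using the equivariant map $P\mapsto P\overline P$ to identify the generic isotropy with the hidden circle, I would record the $\LU(1)$-module structure of the tangent space to a principal orbit, exactly as in \eqref{eq:tspace-prc-orb}, and pin down the invariant direction $e_8$ along which any invariant perturbation must act. This reduces an $\SU(3)$-invariant perturbation to a $t$-dependent $\LU(1)$-invariant nilpotent endomorphism of the tangent space, where $t$ is the cohomogeneity parameter.

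Next I would establish the analog of \cref{prop:U1lindef} for $\gamma$. Choosing an adapted basis in which $\LU(1)$ acts as in that proposition, I would solve $\rho(A)^2\gamma=0$ over the $\LU(1)$-invariant nilpotent endomorphisms, modulo the stabilizer $\su(3)$ of $\gamma$. As in the quaternionic case the map $A\mapsto\rho(A)\gamma$ is injective on nilpotents, so invariance of the perturbation is equivalent to invariance of $A$; the outcome is a finite-dimensional space of invariant generators and hence a candidate $\SU(3)$-invariant perturbation of the shape $\gamma+dh\wedge(e_8\hook\gamma)$, together with any further generators the computation produces, parametrised by a function $h$ of $t$.

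The decisive step is then harmonicity, and here the $3$-form setting differs essentially from the $\Sp(2)\Sp(1)$ case. Since $\Omega$ is self-dual, closedness alone produced a harmonic structure and the identity $\Ld_{e_8}\Omega\wedge dt=0$ left room for the nontrivial $\G_2\!/\!\SO(4)$ deformation of \cref{thm:mainresult}. For the $3$-form $\gamma$, harmonicity means $d\gamma'=0$ and $d\Hodge\gamma'=0$ for the perturbed form $\gamma'$, and the coclosedness condition has no counterpart in the self-dual world. I would compute both $d\gamma'$ and $d\Hodge\gamma'$ on the principal orbits using the structure equations \eqref{eq:su3} and show that, imposed together, they force $h$ to be constant; equivalently, the hidden $\LU(1)$ generator fails the generalised-Killing-type condition \eqref{eq:gen-Killing} that was available on $\G_2\!/\!\SO(4)$, so no nonconstant perturbing function survives.

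I expect the main obstacle to be precisely this last computation: verifying that the extra equation $d\Hodge\gamma'=0$ is incompatible with any nonzero choice of the perturbing data, so that the closed-and-coclosed system admits only the trivial solution. Once the invariant nilpotent perturbation space is identified, the remaining work is routine exterior-derivative and Hodge-star bookkeeping on $\SU(3)$, but the conceptual point is that the coclosedness requirement, absent in the self-dual quaternionic setting, is exactly what rules out the analog of the $\G_2\!/\!\SO(4)$ deformation.
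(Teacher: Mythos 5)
Your plan is the same one the paper implicitly relies on: its entire proof is the phrase ``Computations show'', and the computation being invoked is exactly the reduction you describe --- pass to the cohomogeneity-one picture, restrict to \( \SU(3) \)-invariant (hence principal-\( \LU(1) \)-invariant) nilpotent endomorphisms, solve \( \rho(A)^2\gamma=0 \) modulo the stabiliser, and test the surviving perturbations against harmonicity. You also correctly isolate the one genuine structural difference from \cref{sec:nil-pert}: \( \gamma \) is a \( 3 \)-form, so ``harmonic'' means \( d\gamma'=0 \) \emph{and} \( d\Hodge\gamma'=0 \), whereas self-duality of \( \Omega \) made the second equation automatic.

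That said, there is a gap: the decisive step is announced rather than performed, and the specific mechanism you predict is a conjecture. You assert that the candidate space of invariant nilpotent perturbations is nonzero, that closedness admits nonconstant \( h \), and that it is the coclosedness equation which forces triviality. None of these is verified, and the triviality could equally occur one stage earlier: the space of \( \SU(3) \)-invariant nilpotent solutions of \( \rho(A)^2\gamma=0 \) could already vanish (the analogue of \cref{prop:U1lindef} must be rederived for \( \gamma \), since \cref{thm:nilpotent-pert} and its quaternionic-angle lemmas are specific to \( \Omega \)), or closedness alone could force \( h'=0 \), since the identity \( dt\wedge\Ld_{e_8}\Omega=0 \) that made the Wolf-space perturbations closed has no a priori counterpart for \( \gamma \). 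Because the paper records no details, I cannot check your predicted mechanism against its argument; as written, your attempt establishes the proposition only modulo the same unperformed computation the paper appeals to.
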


Our studies are also related to \( \G_2 \)-holonomy metrics. The
starting point is the quotient of the quaternionic projective plane by
the circle action generated by the Killing vector field \( X \). More
specifically, one has the \( \SU(3) \)-equivariant map
\begin{equation*}
\pH(2)\setminus\pC(2)\ \to\ S^7\setminus\pC(2)\cong\Lambda^2_-\pC(2)
\end{equation*} 
that appeared in \cite{Atiyah-W:Mtheory}, see also
\cite{Miyaoka:BSmetric}. It is well known that the negative spinor
bundle over \( \pC(2) \) admits a complete metric with holonomy \(
\G_2 \), the so-called Bryant-Salamon metric \cite{Bryant-S:excep}.
By building on work of \cite{Apostolov-S:G2red, Gambioli-N-S:qK8d},
the authors have succeeded in identifying the \( 3 \)-form determining this
\( \G_2 \)-structure in terms of \( X \), the \( 4 \)-form and other
quaternionic data. A more complete study will appear in a forthcoming
paper.


\begin{thebibliography}{10}


\bibitem{Alekseevski:qK1}
D.~V.~Alekseevski\u{\i}.
\newblock {Quaternion Riemann spaces with transitive reductive or solvable groups of motions.}
\newblock {\em {Funct. Anal. Appl.}}, 4:321--322, 1970.

\bibitem{Alekseevski:qK2}
D.~V.~Alekseevski\u{\i}.
\newblock {Classification of quaternionic spaces with transitive solvable group of motions.}
\newblock {\em {Izv. Akad. Nauk SSSR, Ser. Mat.}}, 39:315--362, 1975.

\bibitem{Apostolov-S:G2red}
V.~Apostolov, and S.~Salamon.
\newblock{K\"ahler reduction of metrics with holonomy \( \G_2 \).}
\newblock{\em {Commun. Math. Phys.}}, 246(1): 43--61, 2004.

\bibitem{Atiyah-W:Mtheory}
M.~Atiyah and E.~Witten.
\newblock {M-theory dynamics on a manifold of \( \G_2 \) holonomy.}
\newblock {\em {Adv. Theor. Math. Phys.}}, 6(1):1--106, 2002.

\bibitem{Bryant:closedsp2sp1}
R.~L.~Bryant.
\newblock {Closed \( \rm{Sp}(2)\rm{Sp}(1) \)-structures on \( 8 \)-manifolds.}
\newblock 2005.
\newblock unpublished.

\bibitem{Bryant-S:excep}
R.~L.~Bryant and S.~Salamon.
\newblock {On the construction of some complete metrics with exceptional holonomy.}
\newblock {\em Duke Math. J.}, 58(3):829--850, 1989.

\bibitem{Conti:EmbeddingIntoManifolds}
D.~Conti.
\newblock {Embedding into manifolds with torsion.}
\newblock {\em Math. Z.}, 268(3-4):725--751, 2011.

\bibitem{Conti-M:Harmonic}
D.~Conti and T.~B.~Madsen.
\newblock {Harmonic structures and intrinsic torsion.}
\newblock {\em {Transform. Groups}}, 20(3):699--723, 2015.

\bibitem{Cortes:qKASp}
V.~Cort\'es.
\newblock {Alekseevskian spaces.}
\newblock {\em {Differ. Geom. Appl.}}, 6(2):129--168, 1996.

\bibitem{Cortes:non-hom-qK}
V.~Cort{\' e}s, M.~Dyckmanns, and S.~Suhr.
\newblock {Completeness of projective special k\"ahler and quaternion-k\"ahler manifolds.}
\newblock 2016.
\newblock arXiv:1607.07232 [math.DG].

\bibitem{Fulton-H:Rep}
W.~Fulton and J.~Harris.
\newblock {\em Representation theory}, volume 129 of {\em Graduate Texts in Mathematics}.
\newblock Springer-Verlag, New York, 1991.
\newblock A first course, Readings in Mathematics.

\bibitem{Gambioli:SU3action}
A.~Gambioli.
\newblock {\( \SU(3) \)-manifolds of cohomogeneity one.}
\newblock {\em Ann. Global Anal. Geom.}, 34(1):77--100, 2008.

\bibitem{Gambioli-N-S:qK8d}
A.~Gambioli, Y.~Nagatomo, and S.~Salamon.
\newblock {Special geometries associated to quaternion-k\"ahler 8-manifolds.}
\newblock {\em {J. Geom. Phys.}}, 91:146--162, 2015.

\bibitem{Giovannini:SpecialStructuresAnd}
D.~Giovannini.
\newblock {\em Special structures and symplectic geometry.}
\newblock PhD thesis, Universit{\`{a}} di {T}orino, 2003.

\bibitem{Hitchin:Stableformsand}
N.~Hitchin.
\newblock Stable forms and special metrics.
\newblock In {\em Global differential geometry: the mathematical legacy of
  {A}lfred {G}ray ({B}ilbao, 2000)}, volume 288 of {\em Contemp. Math.}, pages
  70--89. Amer. Math. Soc., Providence, RI, 2001.

\bibitem{Hoelscher:cohom1}
C.~A.~Hoelscher.
\newblock {Classification of cohomogeneity one manifolds in low dimensions.}
\newblock {\em {Pac. J. Math.}}, 246(1):129--185, 2010.

\bibitem{Horn-J:con-ac}
R.~A.~Horn, and C.~R.~Johnson.
\newblock {\em Matrix Algebra}
\newblock {Cambridge Univ. Press, Cambridge, 1985.}

\bibitem{Kobak-S:wolfspace}
P.~Kobak, and A.~F.~Swann.
\newblock {Quaternionic geometry of a nilpotent variety.}
\newblock {\em {Math. Ann}}, 297(4):747--764, 1993.

\bibitem{Kuroki:SU3}
S.~Kuroki.
\newblock {On \( 8 \)-manifolds with \( \SU(3) \)-actions.}
\newblock {\em {RIMS Kokyuroku}}, 1569:81--93, 2007.

\bibitem{Miyaoka:BSmetric}
R.~Miyaoka.
\newblock {Topology of the Bryant-Salamon \( \G_2 \)-manifolds and some Ricci flat manifolds.}
\newblock {\em RIMS Kokyuroku} 1502:230--237, 2006.

\bibitem{Salamon:Almostparallelstructures}
S.~Salamon.
\newblock {Almost parallel structures.}
\newblock In {\em Global differential geometry: the mathematical legacy of {A}lfred {G}ray ({B}ilbao, 2000)}, 
volume 288 of {\em Contemp. Math.}, pages 162--181. Amer. Math. Soc., Providence, RI, 2001.

\bibitem{Swann:AspectsSymplectiquesDe}
A.~F.~Swann.
\newblock {Aspects symplectiques de la g\'eom\'etrie quaternionique.}
\newblock {\em C. R. Acad. Sci. Paris S\'er. I Math.}, 308(7):225--228, 1989.

\bibitem{Wolf:hom_qK}
J.~A. Wolf.
\newblock {Complex homogeneous contact manifolds and quaternionic symmetric spaces.}
\newblock {\em J. Math. Mech.}, 14:1033--1047, 1965.

\end{thebibliography}
\end{document}